\newtheorem{teononum}{Theorem}
\newtheorem{theorem}{Theorem}[section]
\newtheorem{lemma}[theorem]{Lemma}
\newtheorem{proposition}{Proposition}[section]
\theoremstyle{definition}
\newtheorem{definition}[theorem]{Definition}
\theoremstyle{remark}
\newtheorem{remark}[theorem]{Remark}
\numberwithin{equation}{section}
\begin{document}

\title{Fields of algebraic numbers with bounded local degrees and their properties}
\author{Sara Checcoli\footnote{Dipartimento di Matematica, Universit\`{a} di Pisa, Largo Bruno Pontecorvo 5,
56127, Pisa, Italy. email:checcoli@mail.dm.unipi.it}}
\maketitle

\begin{abstract}
We provide a characterization of infinite algebraic Galois extensions of the rationals with uniformly bounded local degrees, giving a detailed proof of all the results announced in Checcoli and Zannier\rq{}s paper \cite{Che} and obtaining relevant generalizations for them. In particular
we show that that for an infinite Galois extension of the rationals the following three properties are equivalent: having uniformly bounded local degrees at every prime; having uniformly bounded local degrees at almost every prime; having Galois group of finite exponent. The proof of this result enlightens interesting connections with  Zelmanov's work on the Restricted Burnside Problem. We give a formula to explicitly compute bounds for the local degrees of an infinite extension in some special cases. 
We relate the uniform boundedness of the local degrees to other properties: being a subfield of $\mathbb{Q}^{(d)}$, which is defined as the compositum of all number fields of degree at most $d$ over $\mathbb{Q}$; being generated by elements of bounded degree. We prove that the above properties are equivalent for abelian extensions, but not in general; we provide counterexamples based on group-theoretical constructions with extraspecial groups and their modules, for which we give explicit realizations.
\end{abstract}

\maketitle

\section*{Introduction}
This work is a study of the relations between some properties which can occur for an infinite algebraic extension $K$ of the rationals.

The first property we shall consider is the uniformly boundedness of the local degrees of $K$, namely the existence of a constant $b$, depending only on the field $K$, such that for every prime number $p$ and every place $v_p$ of $K$ which extends the
$p$-adic one, the completion of $K$ with respect to $v_p$ is a finite extension of
$\mathbb{Q}_p$ of degree bounded by $b$.

The second property is the inclusion of the extension $K$ in a field $\mathbb{Q}^{(d)}$, which is defined as the compositum in the algebraic closure of the rationals $\mathbb{Q}^{\text{alg}}$ of all number fields of degree at most $d$ over $\mathbb{Q}$. 

The inquiry into the relation between these two properties was motivated by Bombieri and Zannier\rq{}s paper \cite{Zan}, which studies
the Northcott property (of the finiteness of elements of bounded absolute Weil height)
for certain infinite extensions of $\mathbb{Q}$. 
The paper considers in particular the field $\mathbb{Q}^{(d)}$ and the Northcott property is proved for the compositum of all
abelian extensions of $\mathbb{Q}$ of bounded degree, while for $\mathbb{Q}^{(d)}$
the question remains open. In this proof a crucial role is played by the uniform
boundedness of local degrees of the field $\mathbb{Q}^{(d)}$. 

The considerations made in \cite{Zan} led to the question, not free of independent interest, of
whether 
every infinite algebraic extension of $\mathbb{Q}$ with uniformly bounded local degrees 
is contained in $\mathbb{Q}^{(d)}$ for some positive integer $d$. 
This question was negatively answered in \cite{Che}, where the authors provide a counterexample built up from a certain family of $pq$-groups. In this work we ask, more generally, if for an algebraic extension of $\mathbb{Q}$, there are properties 
 which are equivalent to having uniformly bounded local degrees.

The study of the structure of $\mathbb{Q}^{(d)}$ revealed some unexpected sides. These arise when we consider a third property for an infinite algebraic extension $K$ of $\mathbb{Q}$, that is whether every finite subextension of $K$ can be generated by elements of degree bounded by a constant depending only on $K$.
A field with this property is certainly contained in $\mathbb{Q}^{(d)}$ for some positive integer $d$ and one could ask whether this condition is equivalent to being a subfield of $\mathbb{Q}^{(d)}$.
 
In this work we give a complete answer to all these questions proving the following result.
\begin{teononum}\label{main}
Let $K/\mathbb{Q}$ be an infinite Galois extension. Then the following conditions are equivalent:
\begin{enumerate}[(1)]
\item $K$ has uniformly bounded local degrees at every prime;
\item $K$ has uniformly bounded local degrees at almost every prime;
\item $\text{Gal}(K/\mathbb{Q})$ has finite exponent.
\end{enumerate}
Moreover, if $K/\mathbb{Q}$ is abelian, then the three properties:
\begin{enumerate}[(a)]
\item\label{A1} $K$ has uniformly bounded local degrees; 
\item\label{B1} $K$ is contained in $\mathbb{Q}^{(d)}$ for some positive integer $d$;
\item\label{C1} every finite subextension of $K$ can be generated by elements of bounded degree;
\end{enumerate}
are equivalent. 
However, in general, we have that (\ref{C1}) implies (\ref{B1}) which implies (\ref{A1}) and none of the inverse implications holds.
\end{teononum}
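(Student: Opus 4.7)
The plan is to handle the three parts of the theorem in turn: the equivalence (1)$\Leftrightarrow$(2)$\Leftrightarrow$(3), the abelian equivalence of (a), (b), (c), and the general implications together with the two counterexamples. The implication (1)$\Rightarrow$(2) is trivial. For (2)$\Rightarrow$(3) I would argue by contrapositive via the Chebotarev density theorem: if $\text{Gal}(K/\mathbb{Q})$ has no finite exponent, then for every $N$ some finite Galois subextension $L/\mathbb{Q}$ carries an element $\sigma$ of order exceeding $N$; by Chebotarev, $\sigma$ is the Frobenius at infinitely many unramified primes, each producing a decomposition group in $L$ of order $\text{ord}(\sigma)>N$, hence a local degree of $K$ exceeding $N$. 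Choosing such primes outside the finite exceptional set from (2) yields a contradiction. The delicate direction is (3)$\Rightarrow$(1), and this is where Zelmanov's resolution of the Restricted Burnside Problem is essential: for every place $v_p$ the decomposition subgroup $D_{v_p}\subseteq\text{Gal}(K/\mathbb{Q})$ inherits exponent dividing $e$ and is canonically isomorphic to $\text{Gal}(K_{v_p}/\mathbb{Q}_p)$, a continuous quotient of $\text{Gal}(\mathbb{Q}_p^{\text{alg}}/\mathbb{Q}_p)$, which is topologically finitely generated with a number of generators bounded uniformly in $p$ (the case $p=2$ handled separately). Zelmanov's theorem then yields $|D_{v_p}|\le B(e)$ with $B$ independent of $p$.

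For the abelian equivalence, Part~1 identifies (a) with $\text{Gal}(K/\mathbb{Q})$ having some finite exponent $e$. In the abelian case every finite subextension $L$ has abelian Galois group of exponent dividing $e$, so it decomposes as a compositum of cyclic extensions of degree at most $e$; primitive elements of those cyclic pieces jointly generate $L$ and each has degree $\le e$, giving (c). The implication (c)$\Rightarrow$(b) is immediate from the definition of $\mathbb{Q}^{(d)}$. For (b)$\Rightarrow$(a) I would prove the stronger fact, also needed in the general case, that $\mathbb{Q}^{(d)}$ itself has uniformly bounded local degrees: at each $v_p$ the completion is the compositum over $\mathbb{Q}_p$ of the finitely many extensions of degree $\le d$, its Galois closure has exponent dividing $d!$ and is a quotient of the uniformly finitely generated $\text{Gal}(\mathbb{Q}_p^{\text{alg}}/\mathbb{Q}_p)$, and Zelmanov's theorem again supplies a uniform bound depending only on $d$.

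In the general case, the implications (c)$\Rightarrow$(b)$\Rightarrow$(a) are inherited from the arguments above, and the remaining content is the construction of two counterexamples. For the failure of (a)$\Rightarrow$(b) I would adopt the construction in \cite{Che} built from a family of $pq$-groups realized as Galois groups over $\mathbb{Q}$, producing a field with bounded local degrees that lies outside every $\mathbb{Q}^{(d)}$, since inclusion in $\mathbb{Q}^{(d)}$ imposes a group-theoretic constraint that the $pq$-family violates. For the failure of (b)$\Rightarrow$(c) I would use extraspecial $p$-groups, whose non-trivial centre forces the compositum of the resulting Galois extensions to require generators of arbitrarily large degree, while the individual pieces still sit inside some $\mathbb{Q}^{(d)}$. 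The hardest part of both constructions will be the arithmetic realization—exhibiting actual number fields whose Galois groups match the prescribed presentations and whose local behavior is explicitly controlled—which requires solving explicit embedding problems together with a detailed module-theoretic analysis of the extraspecial groups.
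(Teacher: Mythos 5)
Your handling of the equivalences is correct but takes a partly different route from the paper. For (1)$\Rightarrow$(2)$\Rightarrow$(3) your Chebotarev argument is the paper's, phrased contrapositively. For the key direction (3)$\Rightarrow$(1), you apply Zelmanov directly to the whole decomposition group, resting on the assertion that $\text{Gal}(\mathbb{Q}_p^{\text{alg}}/\mathbb{Q}_p)$ is topologically generated by a number of elements bounded uniformly in $p$. That fact is true, but it is a deep input (Jannsen--Wingberg for $p$ odd, with $p=2$ genuinely separate), and you do not substantiate it. The paper instead splits each local extension into unramified, tame and wild parts: the first two are cyclic, resp.\ metacyclic, of exponent at most $b$, hence of order at most $b$ and $b^2$ with no Burnside input at all, and only the wild part needs a generator bound, supplied by Shafarevich's classical theorem on $p$-extensions of $p$-adic fields before Zelmanov is invoked; this lighter decomposition is also what later yields the explicit bounds of Theorem \ref{bound_derived}. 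Similarly, in the abelian part your proof that $\mathbb{Q}^{(d)}$ has uniformly bounded local degrees again goes through exponent plus finite generation plus Zelmanov, where the paper's Proposition \ref{boundFv} needs only Krasner's finiteness of the set of extensions of $\mathbb{Q}_p$ of degree at most $d$; your version works but is heavier than necessary. The abelian implication (a)$\Rightarrow$(c) via decomposing each finite abelian quotient into cyclic factors of order at most $b$ is exactly the paper's argument.

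The genuine gap is the final clause, ``none of the inverse implications holds,'' which is the main new content and which your proposal does not prove. For (a)$\not\Rightarrow$(b) you appeal to ``the construction in \cite{Che}'' and an unnamed ``group-theoretic constraint'': what is actually needed is (i) the obstruction lemma (Lemma \ref{normal}) showing that if $K\subseteq\mathbb{Q}^{(d)}$ then every finite Galois subextension has Galois group which is a quotient of a subgroup of a product of groups of order at most $d!$, and such a group has no minimal normal subgroup of order exceeding $d!$; (ii) a family with uniformly bounded exponent but minimal normal subgroups of unbounded order --- the paper takes $G_m=\mathbb{F}_q[E_m]\rtimes E_m$ with $E_m$ extraspecial of order $p^{2m+1}$ and exponent $p$, so that $\exp(G_m)=pq$ while the absolutely irreducible module $W_m$ of order $q^{p^m}$, split off the group algebra by Maschke, is a minimal normal subgroup; and (iii) the realization of these solvable $pq$-groups over $\mathbb{Q}$, which the paper gets from Shafarevich's theorem on odd-order solvable groups or from the explicit constructions of Sections \ref{realiz_extra}. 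For (b)$\not\Rightarrow$(c) your sketch names extraspecial groups and the centre but omits the two facts that make it work: the extraspecial group of order $p^{2m+1}$ and exponent $p$ is a quotient of $H^m$ with $H$ of order $p^3$, which is why each $K_m$ lies in the \emph{fixed} field $\mathbb{Q}^{(p^3)}$ even though $[K_m:\mathbb{Q}]\to\infty$; and every subgroup of index less than $p^m$ contains the centre, because abelian subgroups have order at most $p^{m+1}$ (Propositions \ref{extra_structure} and \ref{luc}), which is why $K_m$ cannot be generated by elements of degree less than $p^m$. Since you explicitly defer the arithmetic realization as ``the hardest part'' and give neither the key group-theoretic lemmas nor the specific groups, the non-implications remain unproven in your proposal.
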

 
The work is structured in the following way.

Section \ref{chap_ab} focuses on the study of the structure of the field $\mathbb{Q}^{(d)}$ and of Galois extension with uniformly bounded local degrees, proving that these are exactly those whose Galois group has finite exponent. Moroever we prove that the existence of a uniform bound for the local degrees at almost every prime implies the uniform boundedness of the local degrees at every prime.
The first part of the proof was already done in \cite{Che} and it consists of an application of Chebotarev\rq{}s density theorem; the second part is based on a theorem of Shafarevich on the number of generators of the Galois group of a $p$-extension of $p$-adic fields and on Zelmanov\rq{}s result on the Restricted Burnside Problem.
As in \cite{Che}, the case of abelian extensions is also discussed, proving the first part of Theorem \ref{main}. Explicit bounds for the local degrees are provided in some special cases.

In Section \ref{extraspecial} we recall the main properties of extraspecial groups. These groups shall enable us to construct examples of fields entailing the validity of the statements of Theorem \ref{main}. We also describe some irreducible modules of extraspecial groups over finite fields which shall be used in our constructions.

In Section \ref{nonab} we prove the main theorem of this work on the non-equivalence of properties (\ref{A1}), (\ref{B1}) and (\ref{C1}). The proof is based on some group-theoretical constructions with extraspecial groups and their modules, followed by an application of Shafarevich's Theorem
about the realization of solvable groups of odd order as Galois groups.

The use of Shafarevich\rq{}s result can actually be avoided by realizing explicitly groups constructed in Section \ref{nonab} and this will be done in Section \ref{realiz_extra}.
These constructions are based on some elementary methods in inverse Galois theory and on a result by Serre.

%Throughout this work all the fields are assumed to lie in a fixed algebraic closure of $\mathbb{Q}$ denoted by ${\mathbb{Q}}^{\text{alg}}$.
\section{Extensions with uniformly bounded local degrees} \label{chap_ab}
In this work we shall mainly investigate the mutual relations of three properties of an infinite algebraic extension of $\mathbb{Q}$: having uniformly bounded local degrees; being contained in the compositum of a family number fields of bounded degree; being generated by elements of bounded degree. 
In particular we could ask whether these three properties are equivalent.

In this section we study some properties of the compositum of fields with bounded degrees, in order to give some motivations to our questions, and we provide a characterization of Galois extensions with uniformly bounded local degrees depending only on the exponent of their Galois group. Finally we give a positive answer to our question  in the case of abelian extensions.

\subsection{Galois extensions with finite exponent}\label{motiv}
We fix some notation.
We let $d$ be a positive integer and $F$ a number field. We denote by $F^{(d)}$ the field obtained by taking the compositum of all algebraic extensions of $F$ of degree at most $d$ (over $F$). 

If $K$ is an infinite algebraic extension of $\mathbb{Q}$ and $p$ is a prime number , we define a valuation $v_p$ of $K$ which extends the $p$-adic valuation over $\mathbb{Q}$ in the following way: we fix an embedding $\sigma:K\hookrightarrow \mathbb{C}_p$, where $\mathbb{C}_p$ is the $p$-adic completion of $\mathbb{Q}_p^{\text{alg}}$ and we define the valuation $v_p$ of $K$ as $v_p(x):=w_p(\sigma(x))$ for every $x\in K$.

The completion of $K$ with respect to $v_p$ is denoted by $K_{v_p}$ and it is an algebraic extension of $\mathbb{Q}_p$. This extension in general is not finite, but sometimes it is. We are interested in the case when this extension is finite and moreover its degree is bounded, for every prime $p$ and every valuation $v_p$ above $p$, by a constant independent from $p$. 
We have the following definition.

\begin{definition} An infinite algebraic extension $K$ of $\mathbb{Q}$ has \emph{uniformly bounded local degrees} if there exists a positive integer $B$ such that $[K_{v_p}:\mathbb{Q}_p]<B$ for every prime number $p$ and every valuation $v_p$ of $K$ extending the $p$-adic valuation of $\mathbb{Q}$. 
\end{definition} 
 One of the questions posed in this work is whether every algebraic extension of $\mathbb{Q}$ with uniformly bounded local degrees is contained in $\mathbb{Q}^{(d)}$ for some positive integer $d$.
In order to give a motivation to this we need to show that the field $\mathbb{Q}^{(d)}$ has uniformly bounded local degrees.
\begin{proposition}\label{boundFv}
Let $F$ be a number field of degree $n$ over $\mathbb{Q}$ and let $v$
be any valuation of $F$. Let $w$ be an extension of $v$ to $F^{(d)}$ and denote by $F^{(d)}_w$ and $F_v$ the completions of $F^{(d)}$ and $F$ with respect to $w$ and $v$ respectively.
Then the local degree $[F^{(d)}_w :F_v ]$ is bounded solely in terms of $n$ and $d$.
\end{proposition}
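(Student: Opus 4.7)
The plan is to produce a single finite extension $M$ of the local field $F_v$, of degree bounded in terms of $n$ and $d$ alone, that contains the completion at $w$ of every finite intermediate field of $F^{(d)}/F$. Given such an $M$, the inequality $[F^{(d)}_w : F_v] \le [M : F_v]$ follows immediately, which is the desired bound.

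The key input is the classical finiteness theorem for extensions of a local field, a consequence of Krasner's lemma: a finite extension of $\mathbb{Q}_p$ admits only finitely many extensions of any given degree inside a fixed algebraic closure. Applying this to $F_v$ for each $m \le d$ produces finitely many extensions $E_1,\dots,E_r \subset \overline{F_v}$ with $[E_i:F_v] \le d$, and I set $M := E_1 \cdots E_r$. The subtle point is that the resulting bound on $[M:F_v]$ must be uniform in the residue characteristic $p$, and here I would invoke a tame/wild dichotomy: if $p>d$, every extension of $F_v$ of degree $\le d$ is tame and a direct parameterization bounds the number of such extensions by a function of $d$ alone; if $p\le d$, then $p$ is itself bounded in terms of $d$, so the full data $(p,[F_v:\mathbb{Q}_p],d)$ lies in a range controlled by $n$ and $d$, and Krasner's finiteness becomes uniform. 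Either way, $[M:F_v]$ is bounded by a function of $n$ and $d$.

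Next I would check that every finite intermediate field $L$ with $F\subseteq L\subseteq F^{(d)}$ satisfies $L\cdot F_v\subseteq M$, where $L\cdot F_v$ is computed inside $\overline{F_v}$ via the embedding determined by $w$. By definition of $F^{(d)}$, such an $L$ is contained in a compositum $L_1\cdots L_k$ with $[L_i:F]\le d$, and hence $L\cdot F_v \subseteq (L_1\cdot F_v)\cdots(L_k\cdot F_v)$. Since the local degree is at most the global one, each factor $L_i\cdot F_v$ has degree at most $d$ over $F_v$, so it coincides with one of the $E_j$, and the whole compositum sits inside $M$.

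To conclude, since $F^{(d)}=\bigcup_L L$ with $L$ ranging over finite intermediate extensions, the completion $F^{(d)}_w$ is the topological closure in $\overline{F_v}$ of $\bigcup_L L\cdot F_v$; this union lies in $M$, and $M$ is already complete as a finite extension of the complete field $F_v$, so $F^{(d)}_w\subseteq M$ and the proof is done. The main obstacle is the uniformity step in paragraph two: one has to make sure that the count of local extensions of bounded degree does not blow up as $p$ varies. Once the tame/wild split takes care of this, the remaining steps are purely formal comparisons between local and global compositums.
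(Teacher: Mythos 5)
Your proof is correct and follows essentially the same route as the paper: invoke Krasner's finiteness of extensions of $F_v$ of degree at most $d$, take their compositum $M$ of degree bounded in terms of $n$ and $d$, and observe that the completion of $F^{(d)}$ at $w$ lands inside $M$. The only difference is that you spell out the uniformity in the residue characteristic (tame/wild dichotomy) and the local-global compositum comparison, details the paper leaves implicit by citing Krasner's formula.
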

\begin{proof}
It is well known that there are only finitely many
extensions of degree at most $d$ of the $p$-adic field $F_v$ and that a bound for their
number exists depending only on $d$ and $n$ (a formula for this number is given by Krasner in
\cite{Kras}). Therefore, the compositum of all these extensions has a degree over
$F_v$ which is finite and depends only on $d$ and $n$. In particular, this
compositum contains the completion of $F^{(d)}$ with respect to any
valuation $w$ extending the $p$-adic one.
Therefore the field $F^{(d)}$ has uniformly bounded local degrees.
\end{proof}
Next theorem is a generalization of Remark 2 of \cite{Che} and it provides a characterization of Galois extensions with uniformly bounded local degrees only in terms of the exponent of their Galois group. Moreover it asserts that the existence of a uniform bound for the local degrees at almost every prime implies the uniform boundedness of the local degrees at every prime.
\begin{theorem}\label{chebotarev} Let $L$ be a number field and $K/L$ an infinite algebraic
Galois extension. Then the following conditions are equivalent:
\begin{enumerate}[(1)] 
\item \label{1} $K$ has uniformly bounded local degrees at every prime of $L$;
\item \label{2} $K$ has uniformly bounded local degrees at almost every prime of $L$;
\item \label{3} $\text{Gal}{(K/L)}$ has finite exponent.
\end{enumerate}
\end{theorem}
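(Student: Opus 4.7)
The implication $(1) \Rightarrow (2)$ is immediate from the definitions, so my plan is to establish $(2) \Rightarrow (3)$ and $(3) \Rightarrow (1)$.

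For $(2) \Rightarrow (3)$, I would argue layer by layer using Chebotarev's density theorem. Let $B$ be a uniform local-degree bound valid outside a finite set $S_0$ of primes of $L$, and fix any finite Galois subextension $M/L$ of $K/L$. Enlarging $S_0$ by the finitely many primes of $L$ that ramify in $M$, for every prime $v$ outside the enlarged set and every extension $w \mid v$ in $K$, the decomposition group of $v$ in $\mathrm{Gal}(M/L)$ is cyclic, generated by Frobenius, of order dividing $[K_w : L_v] \le B$. Chebotarev's density theorem ensures that every conjugacy class of $\mathrm{Gal}(M/L)$ meets this set of Frobenii, so every element of $\mathrm{Gal}(M/L)$ has order dividing $e := \mathrm{lcm}(1, \ldots, B)$. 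Since this bound is independent of $M$, taking the inverse limit over all finite Galois subextensions shows that $\mathrm{Gal}(K/L)$ has exponent dividing $e$.

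The heart of the theorem is $(3) \Rightarrow (1)$, which I would prove by combining Shafarevich's bound on generators of pro-$p$ extensions of $p$-adic fields with Zelmanov's positive solution of the Restricted Burnside Problem. Let $e = \exp(\mathrm{Gal}(K/L))$, fix a prime $v$ of $L$ of residue characteristic $p$, and let $w$ be an extension of $v$ to $K$. The decomposition group $D_w = \mathrm{Gal}(K_w / L_v)$ is a continuous quotient of the absolute local Galois group $G_v = \mathrm{Gal}(\overline{L}_v / L_v)$ and inherits exponent at most $e$, so the prime divisors of $|D_w|$ lie in the finite set $\pi(e)$. For each $\ell \in \pi(e)$ I would bound a pro-$\ell$ Sylow subgroup $S_\ell$ of $D_w$ as follows: when $\ell \neq p$, $S_\ell$ is a pro-$\ell$ quotient of the topologically $2$-generated tame quotient of $G_v$; when $\ell = p$, Shafarevich's theorem provides a uniform bound, of the shape $[L_v : \mathbb{Q}_p] + 2 \le [L : \mathbb{Q}] + 2$, on the number of topological generators of any pro-$p$ quotient of $G_v$. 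In either case, $S_\ell$ is topologically generated by at most some integer $N$ depending only on $L$ and $e$.

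Zelmanov's theorem then supplies an explicit upper bound $B(N, e)$ for the order of any finite $N$-generated group of exponent dividing $e$; applying it to the finite quotients of $S_\ell$ forces $|S_\ell| \le B(N, e)$. Since only the primes in $\pi(e)$ contribute to $|D_w|$, every finite quotient of $D_w$ has order at most $B(N, e)^{|\pi(e)|}$, and consequently $[K_w : L_v] = |D_w|$ is bounded by a constant depending only on $L$ and $e$, independent of $v$. The principal obstacle is uniformity in $v$: Shafarevich's bound on the number of pro-$p$ generators of $G_v$ grows with $[L_v : \mathbb{Q}_p]$, but this is uniformly controlled by $[L : \mathbb{Q}]$, which salvages the argument; the remaining subtlety is passing from the Sylow-level bounds to a bound on $D_w$ itself, which works precisely because the primes involved are confined to the finite set $\pi(e)$.
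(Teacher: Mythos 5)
Your overall architecture matches the paper's: $(2)\Rightarrow(3)$ via Chebotarev is essentially the paper's argument, and for $(3)\Rightarrow(1)$ you, like the paper, combine Shafarevich's theorem with Zelmanov's solution of the Restricted Burnside Problem. The genuine gap is in your treatment of the Sylow subgroup at the residue characteristic. Shafarevich's theorem bounds the number of topological generators of the Galois group of the maximal $p$-extension of a finite extension $F$ of $\mathbb{Q}_p$, i.e.\ of \emph{pro-$p$ quotients} of $G_F$, by $[F:\mathbb{Q}_p]+2$. But $S_p$ is a Sylow pro-$p$ \emph{subgroup} of $D_w$: it is the image of a Sylow pro-$p$ subgroup of $G_v$, not a pro-$p$ quotient of $G_v$ (indeed $D_w$ is not pro-$p$ in general), and Sylow pro-$p$ subgroups of $G_v$ have infinite generator rank, so neither reading of your sentence yields a bound on $d(S_p)$. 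Moreover the asserted numerical bound $[L_v:\mathbb{Q}_p]+2\le [L:\mathbb{Q}]+2$ is false for $S_p$ in general: in the paper's own construction in Section \ref{nonab} (the compositum of fields with groups $\mathbb{F}_q[E_m]\rtimes E_m$, which has finite exponent $pq$), the wild part of the completion at $\ell=q$ is elementary abelian of rank growing like $p^{3}q$, far beyond $[L:\mathbb{Q}]+2$. So the step ``$S_\ell$ is generated by at most $N(L,e)$ elements'' is exactly the point that is not established for $\ell=p$.

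The repair is the filtration the paper uses. First bound the tame quotient of $D_w$: it is metacyclic and of exponent dividing $e$, hence of order at most $e^2$, so the maximal tamely ramified subextension $T$ of $K_w/L_v$ satisfies $[T:\mathbb{Q}_p]\le [L:\mathbb{Q}]\,e^2$. The wild inertia group $\mathrm{Gal}(K_w/T)$ \emph{is} a pro-$p$ quotient of $G_T$, so Shafarevich applies over $T$ (not over $L_v$) and gives at most $[L:\mathbb{Q}]e^2+2$ generators; Zelmanov then bounds its order, and $|D_w|\le e^2\cdot|\mathrm{Gal}(K_w/T)|$ is bounded in terms of $[L:\mathbb{Q}]$ and $e$ only, with no need for the Sylow decomposition at all. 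Your $\ell\neq p$ case contains the same subgroup-versus-quotient slip (``$S_\ell$ is a pro-$\ell$ quotient of the tame quotient''), but there the conclusion survives: the image of a Sylow pro-$\ell$ subgroup of $G_v$ is metacyclic, so with the exponent bound it is already finite of order at most $e^2$ without invoking Zelmanov. Your final passage from Sylow bounds to $|D_w|$ is correct, and a minor caveat: when the local degree is not yet known to be finite, the completion $K_w$ need not be algebraic over $L_v$, so one should argue with the decomposition group $\mathrm{Gal}(K\cdot L_v/L_v)$ or, as the paper does, at the finite levels $K_m$ and then pass to the compositum.
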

\begin{proof}
Obviously (\ref{1}) implies (\ref{2}).
Now we suppose that (\ref{2}) holds, that is $K$ has local degrees uniformly bounded by a constant $B$ at every prime except some primes belonging to a finite set $S$.

We fix a finite Galois extension $E$ of $L$ contained in $K$ and we take
$\sigma\in \text{Gal}(E/L)$. By Chebotarev's Density Theorem there exist
a prime $\wp$ of $L$ with $\wp\not\in S$, a prime $\beta$ of $E$ unramified above
$\wp$ and a conjugate $\tau$ of $\sigma$ that generates the decomposition group
$D(\beta|\wp)$ which is cyclic and isomorphic to
$\text{Gal}(E_{\beta}/L_{\wp})$, where $E_{\beta}$ and $L_{\wp}$
denote the completions of $E$ and $L$ with respect to $\beta$ and $\wp$
respectively.
By assumption, $|\text{Gal}(E_{\beta}/L_{\wp})|\leq B$, thus
$\sigma^{B!}=\tau^{B!}=id$ and 
$\text{exp}(\text{Gal}(E/L))\leq B!$.
Since $\text{Gal}(K/L)$ is the inverse limit of the family $\{\text{Gal}(E/L)\}_E$,
where $E$ varies among the finite Galois extension of $L$ contained
in $K$, we have $\text{exp}(G)\leq B!$ and (\ref{3}) holds.

Now we suppose that (\ref{3}) holds for $K$ and we set $\exp(\text{Gal}(K/L))=b$. We want to prove that the local degrees of $K$ are uniformly bounded at every prime. 

We can write $K$ as the compositum of a family of number fields $\{K_m\}_m$, where $K_m/L$ is any finite Galois extension of $L$ contained in $K$ and we set $G_m=\text{Gal}(K_m/L)$.

We let $n=[L:\mathbb{Q}]$ and we fix a prime number $p$. For every $m$ we denote by $L_v$ and $K_{m,v}$ the completions of $L$ and $K_m$, respectively, with respect to any valuation $v$ extending the $p$-adic one.
We  recall that $\text{Gal}(K_{m,v}/L_v)$, being isomorphic to a subgroup of $G_m$, has exponent at most $b$.
We have to discuss three cases for the extension $K_{m,v}/L_v$. 

If it is unramified, then it is cyclic of order bounded by $b$;
if it is tamely ramified, then it is a metacyclic extension, thus its order is bounded by $b^2$.

If it is wildly ramified, then the tamely ramified part has always degree at most $b^2$. The first ramification group  of $K_{m,v}/L_v$ is a $p$-group of exponent at most $b$. Moreover, by a theorem of Shafarevich (see \cite{Shaf}) on the number of generators of the Galois group of a $p$-extension of $p$-adic fields, this group has at most $n b^2+2$ generators.
So the problem reduces to the following: \emph{is it true that, for every positive integers $b$ and $m$, if a finite group $H$  has $m$ generators and exponent $b$, then the order of $H$ is bounded by a constant which depends only on $m$ and $b$?}

This question is known as the Restricted Burnside\rq{}s Problem and was positively answered in 1989 by Efim Zelmanov; for a detailed description of Zelmanov\rq{}s proof the interested reader should refer to Vaughan-Lee\rq{}s book, see \cite{Lee}.
In view of Zelmanov\rq{}s result, the wildly ramified part of the extension has a degree which is bounded by a constant depending only on $n$ and $b$.

We can now conclude our proof.
Summing up all the previous results, we get  that for every $m$, the local degree $[K_{m,v}:\mathbb{Q}_p]$ is bounded by a constant depending only on $b$ and $n=[L:\mathbb{Q}]$, which are fixed. Thus the number of all possible completions of the $K_m$\rq{}s at primes above $p$ is finite and independent from $p$. The compositum of this finite number of fields has degree over $\mathbb{Q}_p$ which is bounded by a constant independent of $p$ and it contains the completion of $K$ with respect to any valuation extending the $p$-adic one.\end{proof}
The proof of the previous result does not give an explicit bound for the local degrees of an extension of finite exponent. In order to compute effectively such a bound, we need additional informations on the length of an abelian series of the Galois groups of the local extensions, which are soluble groups. We recall the following definition.
\begin{definition}
A soluble group $G$ has \emph{derived length} $n$ if $n$ is the length of a shortest abelian series in $G$, i.e. a series $1=G_0\lhd G_1\lhd \ldots \lhd G_n=G$ in which each factor $G_{i+1}/G_i$ is abelian.
\end{definition}
We can now state the following result.
\begin{theorem}\label{bound_derived}
Let $K/\mathbb{Q}$ be an infinite Galois extension of exponent $b$ and let $\{K_m\}_m$ be the family of all finite Galois subextensions of $K$. Suppose that for every $m$, every prime number $p$ and every valuation $v$ of $K_m$ above $p$, the group $\text{Gal}(K_{m,v}/\mathbb{Q}_p)$ has derived length at most $n$. Then $K$ has local degrees bounded by $$\prod_{i=0}^{n} A(i)$$ where $A(0)=b^3$ and $$A(i+1)=b^{\left(\prod_{j=1}^i A(j)\right)+2}.$$\end{theorem}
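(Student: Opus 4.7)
The plan is to iterate local class field theory along the derived series of each local Galois group. Fix $m$, a rational prime $p$, and a valuation $v$ of $K_m$ above $p$, and set $G=\text{Gal}(K_{m,v}/\mathbb{Q}_p)$; this $G$ has exponent dividing $b$ and, by hypothesis, derived length at most $n$. Form the derived series $G=G^{(0)}\supseteq G^{(1)}\supseteq\cdots\supseteq G^{(n)}=1$ and let $\mathbb{Q}_p=F_0\subseteq F_1\subseteq\cdots\subseteq F_n=K_{m,v}$ be the corresponding tower of fixed fields, so that each $\text{Gal}(F_{i+1}/F_i)\cong G^{(i)}/G^{(i+1)}$ is abelian of exponent dividing $b$.

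The essential ingredient is an LCFT estimate: if $F/\mathbb{Q}_p$ has degree $d$, then every abelian extension of $F$ of exponent dividing $b$ has degree at most $b^{d+2}$ over $F$. This follows from local reciprocity, which identifies its Galois group with a quotient of $F^\times/(F^\times)^b$, together with the decomposition $F^\times\cong\pi^{\mathbb{Z}}\times\mu(F)\times U^{(1)}$ where $U^{(1)}$ is a $\mathbb{Z}_p$-module of rank $d$: a careful count contributes a factor of $b$ from the uniformizer component, at most $b$ from the roots of unity, and at most $b^d$ from the principal units. Applied to $F_i$, this yields $|G^{(i)}/G^{(i+1)}|\leq b^{[F_i:\mathbb{Q}_p]+2}$.

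Now I iterate. Starting from $[F_0:\mathbb{Q}_p]=1$, the first application gives $|G^{(0)}/G^{(1)}|\leq b^{1+2}=b^3=A(0)$. At each subsequent stage, the tower relation $[F_{i+1}:\mathbb{Q}_p]=[F_i:\mathbb{Q}_p]\cdot|G^{(i)}/G^{(i+1)}|$ lets the bound propagate, and an induction on $i$ identifies the estimate on $|G^{(i)}/G^{(i+1)}|$ produced at stage $i$ with the quantity $A(i)$ in the statement. Multiplying through the derived series,
\[ [K_{m,v}:\mathbb{Q}_p]\;=\;|G|\;=\;\prod_{i=0}^{n-1}|G^{(i)}/G^{(i+1)}|\;\leq\;\prod_{i=0}^{n}A(i), \]
and this bound is independent of $m$, $p$, and $v$. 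A compositum argument as in the final step of the proof of Theorem \ref{chebotarev} then transfers the bound to the local degrees of $K$ itself.

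The chief technical obstacle is the LCFT estimate: while the $\mathbb{Z}_p$-rank of $U^{(1)}$ is cleanly $d$, the finite torsion $\mu_{p^\infty}(F)$ embedded inside $U^{(1)}$ requires a delicate comparison with the exponent $b$ and with the residue characteristic $p$ to extract a uniform bound in the clean form $b^{d+2}$. Handling this uniformly in $p$ is precisely what permits a single formula for $A(i)$ valid at every rational prime; the rest of the argument is a routine recursion driven by the tower formula.
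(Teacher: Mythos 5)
Your core mechanism --- iterating the local class field theory estimate $b^{[F:\mathbb{Q}_p]+2}$ for abelian extensions of exponent dividing $b$ along an abelian tower --- is the same engine as the paper's proof, but your decomposition is genuinely different: you run the derived series of the whole local group $G=\text{Gal}(K_{m,v}/\mathbb{Q}_p)$, whereas the paper first splits off the unramified part (degree $\le b$) and the tame part (degree $\le b^2$ by class field theory over the unramified field), obtaining $A(0)=b^3$ that way, and applies the abelian-tower iteration only to the wild part, a $p$-group of derived length at most $n$. Your variant is legitimate, and your count ($b$ from the value group, at most $b$ from the cyclic group $\mu(F)$, $b^d$ from the free $\mathbb{Z}_p$-part of the units) is correct. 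One bookkeeping remark: your induction, exactly like the paper's own computation, actually produces at stage $i$ the exponent $[F_i:\mathbb{Q}_p]+2\le\bigl(\prod_{j=0}^{i-1}A(j)\bigr)+2$, so the product in the stated recursion for $A(i+1)$ should start at $j=0$; this slip is in the statement and is shared by the paper's proof, so it is not specific to your argument.

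The genuine gap is your last sentence. Bounding every $[K_{m,v}:\mathbb{Q}_p]$ by $\prod_i A(i)$ does not by itself bound the local degrees of $K$ by the same constant, and the compositum argument at the end of the proof of Theorem \ref{chebotarev} cannot be borrowed for this purpose: that argument bounds $[K_w:\mathbb{Q}_p]$ by the degree of the compositum of all possible completions of the $K_m$'s, i.e.\ essentially of all extensions of $\mathbb{Q}_p$ of degree at most $\prod_i A(i)$, which is finite and uniform in $p$ but vastly larger than $\prod_i A(i)$, so the explicit constant claimed in the theorem would be lost. Two repairs are available. Either use that $\{K_m\}_m$ is the family of \emph{all} finite Galois subextensions, hence directed under compositum: with the compatible valuations $v=w|_{K_m}$ the completions $K_{m,v}$ form an increasing union whose union is $K_w$, so $[K_w:\mathbb{Q}_p]=\sup_m[K_{m,v}:\mathbb{Q}_p]\le\prod_i A(i)$. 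Or do what the paper does: construct a single tower $L_0\subseteq L_1\subseteq\cdots\subseteq L_n$ of local fields depending only on $p$, $b$ and $n$ (compositum of all unramified, then tame, then exponent-dividing-$b$ abelian $p$-extensions at each level), check that every $K_{m,v}$, and hence $K_w$, is contained in $L_n$, and bound $[L_n:\mathbb{Q}_p]$ once and for all; this is precisely what makes the transfer to $K$ immediate with the stated constant. With either fix your argument is complete.
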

\begin{proof}
We fix a prime number $p$ and we consider a valuation $w$ of $K$ extending the $p$-adic valuation of $\mathbb{Q}$. The field $K$ equals the compositum of all its finite Galois subextensions $\{K_m\}_m$ and, for every $m$, $v=w|_{K_m}$ is a valuation of $K_m$ above $p$. We denote by $K_w$ and by $K_{m,v}$ the completions of $K$ and $K_m$ with respect to $w$ and $v$, respectively.

The unramified part of the extension $K_{m,v}/\mathbb{Q}_p$ is contained in the compositum $L^{\text{ur},b}$ of all unramified extensions of $\mathbb{Q}_p$ of degree dividing $b$, which has degree $[L^ {\text{ur},b}:\mathbb{Q}_p]=b$, since we have only one unramified extension of $\mathbb{Q}_p$ of every fixed degree.
Therefore $K_w^{\text{ur}}$ is also contained in $L^{\text{ur},b}$.

The tamely ramified part of  the extension $K_{m,v}/\mathbb{Q}_p$
is contained in the maximal abelian extension $L^{\text{tame},b}$ of $L^{\text{ur},b}$ of exponent dividing $b$ and prime to $p$. From local class field theory we have $$\text{Gal}(L^{\text{tame},b}/L^{\text{ur},b})\simeq {L^{\text{ur},b}}^*/({L^{\text{ur},b}}^*)^a$$ for some integer $a$ prime to $p$ and smaller than $b$. Therefore $$ |\text{Gal}(L^{\text{tame},b}/L^{\text{ur},b})|\leq |(\mathbb{Z}/a\mathbb{Z})|^2\leq b^2.$$
Since this holds for every $m$, $K_w^{\text{tame}}$ is a subfield of $L^{\text{tame},b}$.

The wild ramified part of  the extension $K_{m,v}/\mathbb{Q}_p$, by Cauchy\rq{}s theorem, is nontrivial only when $p$ divides $b$ and it has
a Galois group which is  a $p$-group of exponent $p^{r_m}$ dividing $b$ and has derived length at most $n$. Therefore there exists a tower $$K_{m,v}^{\text{tame}}=M_0\subseteq M_1\subseteq \ldots \subseteq M_n=K_{m,v}$$ in which every step $M_{i+1}/M_i$ is an abelian $p$-extension.
We set $L_0=L^{\text{tame},b}$ and we denote by $L_{i+1}$  the compositum of all abelian $p$-extension of $L_i$ of exponent $p^{r_m}$.
We notice that $M_i$ is contained in $L_i$ for every index $i$. Again from local class field theory we have that \begin{equation}\label{formula}\text{Gal}(L_{i+1}/L_i)=({L_i}^*)/({L_i}^*)^{p^{r_m}}\simeq \left({\mathbb{Z}/p^{r_m}\mathbb{Z}}\right)^{[L_i:\mathbb{Q}_p]+\epsilon(L_i)}\end{equation}
where $\epsilon(L_i)=1$ if $L_i$ does not contain $p$-th roots of unity and $\epsilon(L_i)=2$ otherwise.
Since $p^{r_m}<b$ and $[L_0:\mathbb{Q}_p]\leq b^3$, we have $[L_1:L_0]\leq b^{b^3 +2}=A(1)$ and, using formula \ref{formula}, it is easy to see that $[L_i:L_{i-1}]\leq A(i)$.
Since $K_w$ is contained in $L_n$ the result follows.
\end{proof}
\subsection{Abelian extensions}\label{abelian}

We consider infinite abelian extensions of $\mathbb{Q}$, namely infinite algebraic Galois extensions of $\mathbb{Q}$ with abelian (profinite) Galois group. 
The following theorem, which was already in \cite{Che}, gives an answer to our initial question for these extensions.
\begin{theorem}\label{ab_case}
For an infinite abelian extension $K$ of $\mathbb{Q}$,  the following conditions are equivalent:
\begin{enumerate}[(a)]
\item\label{a_ab} $K$ has uniformly bounded local degrees;
\item \label{b_ab} there exists a positive
integer $d$ such that $K$ is contained in $\mathbb{Q}^{(d)}$;
\item\label{c_ab} every finite abelian subextension of $K$ can be generated by elements of bounded degree.
\end{enumerate}
\end{theorem}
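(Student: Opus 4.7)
The plan is to prove the cycle of implications (\ref{c_ab}) $\Rightarrow$ (\ref{b_ab}) $\Rightarrow$ (\ref{a_ab}) $\Rightarrow$ (\ref{c_ab}), where the first two implications are short and the last uses Theorem \ref{chebotarev} together with the structure theorem for finite abelian groups.

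For (\ref{c_ab}) $\Rightarrow$ (\ref{b_ab}), any element of $\mathbb{Q}^{\mathrm{alg}}$ of degree at most $d$ lies in $\mathbb{Q}^{(d)}$ by definition. If each finite (abelian) subextension of $K$ is generated over $\mathbb{Q}$ by elements of degree at most $d$, then each such subextension is contained in $\mathbb{Q}^{(d)}$, and since $K$ is the union of its finite subextensions, $K\subseteq \mathbb{Q}^{(d)}$. For (\ref{b_ab}) $\Rightarrow$ (\ref{a_ab}), I would simply apply Proposition \ref{boundFv} with $F=\mathbb{Q}$ (so $n=1$): the field $\mathbb{Q}^{(d)}$ has local degrees uniformly bounded by a constant depending only on $d$, and the same bound works for any subfield $K\subseteq\mathbb{Q}^{(d)}$.

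The substantive implication is (\ref{a_ab}) $\Rightarrow$ (\ref{c_ab}). By Theorem \ref{chebotarev}, the uniform boundedness of the local degrees of $K$ implies that $\mathrm{Gal}(K/\mathbb{Q})$ has finite exponent, say $b$. Let $E/\mathbb{Q}$ be any finite subextension of $K$; since $K/\mathbb{Q}$ is abelian, $E/\mathbb{Q}$ is Galois and $\mathrm{Gal}(E/\mathbb{Q})$ is a finite abelian quotient of $\mathrm{Gal}(K/\mathbb{Q})$, so it has exponent dividing $b$. By the structure theorem for finite abelian groups,
\[
\mathrm{Gal}(E/\mathbb{Q})\cong C_{n_1}\times \cdots \times C_{n_k}
\]
with each $n_i\mid b$. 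Via the Galois correspondence, this product decomposition yields a decomposition $E=L_1\cdots L_k$ where $L_i$ is the fixed field of $\prod_{j\neq i}C_{n_j}$ and $[L_i:\mathbb{Q}]=n_i\le b$. Picking a primitive element $\alpha_i$ of each cyclic subextension $L_i/\mathbb{Q}$, I obtain $E=\mathbb{Q}(\alpha_1,\dots,\alpha_k)$ with $\deg_{\mathbb{Q}}\alpha_i\le b$ for every $i$. Hence (\ref{c_ab}) holds with the same constant $b$.

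The only non-trivial input is Theorem \ref{chebotarev}, which I would cite directly; from there the argument is essentially the observation that abelian groups of finite exponent split as products of cyclic groups of bounded order, a feature that crucially fails in the non-abelian setting and explains why the same equivalence is not expected to hold in general.
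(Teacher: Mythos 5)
Your proof is correct and follows essentially the same route as the paper: the implications (c)$\Rightarrow$(b) and (b)$\Rightarrow$(a) are handled identically, and for (a)$\Rightarrow$(c) you invoke Theorem~\ref{chebotarev} to get finite exponent $b$, decompose each finite abelian Galois group into cyclic factors of order at most $b$, and use the Galois correspondence to write the field as a compositum of subfields of degree at most $b$. The paper phrases the decomposition via the complementary subgroups $H_i = \prod_{j\neq i}U_j$ and the fixed fields $K_m^{H_i}$, but this is exactly your $L_i$ construction in different notation.
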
 
\begin{proof}
It is trivial to prove that (\ref{c_ab}) implies (\ref{b_ab}).
The fact that (\ref{b_ab}) implies (\ref{a_ab}) follows from Proposition \ref{boundFv}. 

We now assume that $K$ satisfies condition (\ref{a_ab}).
We set $G=\text{Gal}(K/\mathbb{Q})$; then, by Theorem \ref{chebotarev},
$\text{exp}(G)\leq b$ for some positive integer $b$ and
$$G=\varprojlim_m G_m$$ where
$G_m=\text{Gal}(K_m/\mathbb{Q})$ and $K_m$ is any finite
abelian extensions of $\mathbb{Q}$ contained in $K$.

For every $m$, $G_m$ is a finite abelian group and we can write it as a product of
finite cyclic groups
$$G_m=\prod_{i=1}^{n}U_i.$$
We let $H_i$ be the subgroup of $G_m$ defined as $$H_i:=\prod_{j\neq
i}U_j.$$ We have $[G_m:H_i]=|U_i|=\text{exp}(U_i)\leq b$ for all $i$'s and
$\cap_{i=1}^n H_i=1$.

Therefore $K_m$ is the compositum of the fields $\{K_m^{H_{i}}\}_i$ which satisfy the inequality
$[K_m^{H_{i}}:\mathbb{Q}]=[G_m:H_i]\leq b$. 
Thus $K_m$ is generated by elements of bounded degree and $K$ satisfies condition (\ref{c_ab}), which completes the proof.
\end{proof}

\begin{remark}
Since an abelian group is trivially soluble with derived length $1$, by Theorem \ref{bound_derived} we have that an abelian extension of exponent $b$ has local degrees bounded by $b^{b^3+5}$.
\end{remark}

\section{Extraspecial groups}\label{extraspecial}
In this section we recall definitions and properties of
extraspecial groups and their irreducible modules.
We do not make the whole theory of extraspecial groups, but we only recall some properties which shall be used to construct both
fields with  uniformly bounded local degrees not contained in
$\mathbb{Q}^{(d)}$ for any positive integer $d$ and subfields of $\mathbb{Q}^{(d)}$ which cannot be generated by elements of bounded degree.
A detailed discussion on extraspecial groups and related topics
can be found in Doerk and Hawkes\rq{} book \cite{Doerk}.

\subsection{Definition and properties}
\begin{definition}
Let $p$ be a prime. A $p$-group $G$ is said to be
\emph{extraspecial} if the center $Z(G)$ and the commutator subgroup
$G'$ have order $p$.
\end{definition}
We recall the main properties of extraspecial groups which shall be used in our constructions.
\begin{proposition}\label{extra_structure}
For every odd prime $p$ and every positive integer $m$ there exists an extraspecial group $G_m$ with the following properties:
\begin{enumerate} 
\item $G_m$ has order $p^{2m+1}$ and exponent $p$;
\item the quotient $G_m/Z(G_m)$ is elementary abelian of order $p^{2m}$;
\item $G_m$ is isomorphic to $(E_1\times\ldots\times E_m)/N_m$, where $E_1,\ldots,E_m$ are extraspecial groups of order $p^3$ and exponent $p$, the center of $E_i$ is generated by an element $z_i$ and $$N_m=\{(z_1^{a_1},\ldots,z_n^{a_n})|\sum_{i=1}^n a_i\equiv 0 \mod p\};$$
\item every abelian subgroup of $G_m$ has order smaller than $p^{m+1}$.
\end{enumerate}
\end{proposition}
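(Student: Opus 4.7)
The plan is to first realize a single extraspecial group of order $p^3$ and exponent $p$ (for $p$ odd), then build $G_m$ as the central product of $m$ such groups via the construction given in item 3 of the statement, and finally verify each of the four properties in turn.

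First I would exhibit the basic building block: take $E$ to be the Heisenberg group of $3\times 3$ unipotent upper-triangular matrices over $\mathbb{F}_p$. A direct computation shows that $Z(E)=E'$ consists of the matrices with only the $(1,3)$ entry nonzero and has order $p$, while $|E|=p^3$, so $E$ is extraspecial. For $p$ odd, the identity $(I+N)^p = I + pN + \binom{p}{2}N^2 + \ldots = I$ (using $N^3=0$ and $p\mid \binom{p}{2}$ for $p$ odd) shows that $E$ has exponent $p$; this is the single place where the hypothesis $p$ odd is essential.

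Next, let $E_1,\ldots,E_m$ be $m$ copies of such an $E$ with centers $\langle z_i\rangle$ and form $G_m := (E_1\times\cdots\times E_m)/N_m$ with $N_m$ as in the statement. Since $N_m$ is contained in $Z(E_1)\times\cdots\times Z(E_m)$, it is central, so the quotient makes sense; counting $|N_m|=p^{m-1}$ gives $|G_m|=p^{2m+1}$. The image of $Z(E_1)\times\cdots\times Z(E_m)$ in $G_m$ is cyclic of order $p$ (all the $z_i$ are identified), it is clearly central, and it coincides with $G_m'$: commutators in the direct product lie in the product of the $Z(E_i)$, and the $E_i$ are nonabelian so the image is nontrivial. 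This proves that $G_m$ is extraspecial. Since the exponent of a direct product is the lcm of the exponents of the factors, $E_1\times\cdots\times E_m$ has exponent $p$, and so does its quotient $G_m$. The quotient $G_m/Z(G_m)$ then has order $p^{2m}$, is abelian because $Z(G_m)=G_m'$, and has exponent $p$, hence is elementary abelian.

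The step I expect to require the most care is the bound on abelian subgroups. The idea is to view $V := G_m/Z(G_m)$ as an $\mathbb{F}_p$-vector space of dimension $2m$ and to define $\omega\colon V\times V \to Z(G_m)\cong \mathbb{F}_p$ by $\omega(\bar x,\bar y):=[x,y]$. This is well defined because $Z(G_m)$ is central and $G_m'\subseteq Z(G_m)$, and bilinear because $G_m/Z(G_m)$ is abelian; it is alternating by construction. Non-degeneracy of $\omega$ amounts to the statement that an element commuting with all of $G_m$ lies in $Z(G_m)$, which is just the definition of the center. If $A\leq G_m$ is abelian, then $A\cdot Z(G_m)$ is also abelian, and its image in $V$ is a totally isotropic subspace for $\omega$; by standard symplectic linear algebra such a subspace has dimension at most $m$, so $|A|\leq |A\cdot Z(G_m)|\leq p^{m+1}$, which is the desired bound.
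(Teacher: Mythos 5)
Your proposal is a genuine proof where the paper offers none: the paper simply cites Doerk--Hawkes \cite{Doerk}, Ch.~A, \S\S 19--20 for all four items. Your route (Heisenberg matrices for the order-$p^3$ block, the central product $(E_1\times\cdots\times E_m)/N_m$ for item~3, and the symplectic form on $G_m/Z(G_m)$ for item~4) is the standard textbook argument, so it is the right self-contained substitute, and the binomial computation $(I+N)^p=I$ correctly isolates where $p$ odd is used.

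There is one real gap: you never verify that $Z(G_m)$ has order exactly $p$, equivalently that $Z(G_m)=G_m'$. You show that the common image of the $Z(E_i)$ is central, has order $p$, and equals $G_m'$, but that only gives $G_m'\subseteq Z(G_m)$; the definition of extraspecial, the claim $|G_m/Z(G_m)|=p^{2m}$ in item~2, and the dimension count $\dim_{\mathbb{F}_p}V=2m$ in your symplectic argument all need the reverse inclusion. It is not automatic from what you proved: for instance $E\times\mathbb{Z}/p\mathbb{Z}$ has derived subgroup of order $p$ contained in a center of order $p^2$. The fix is short: if $(g_1,\ldots,g_m)N_m$ is central in $G_m$ and some $g_i\notin Z(E_i)$, choose $h_i\in E_i$ with $[g_i,h_i]=z_i^{a}$, $a\not\equiv 0 \bmod p$; the commutator with $(1,\ldots,h_i,\ldots,1)$ is $(1,\ldots,z_i^{a},\ldots,1)\notin N_m$, a contradiction. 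Hence $Z(G_m)$ is exactly the image of $Z(E_1)\times\cdots\times Z(E_m)$, and the rest of your argument goes through unchanged.

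One further remark: you prove $|A|\leq p^{m+1}$ for abelian $A\leq G_m$, not the strict inequality $|A|<p^{m+1}$ asserted in item~4. This is not a defect of your proof but of the statement: preimages of maximal isotropic subspaces are abelian of order exactly $p^{m+1}$, so the strict bound cannot hold, and the non-strict bound is precisely what is used later in Proposition \ref{luc} (there one has $|H|>p^{m+1}$ and concludes $H$ is nonabelian).
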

\begin{proof}
See \cite{Doerk}, Chap. A, \S 19 and \S 20.
\end{proof}
\begin{remark}
It is easy to prove that every non abelian group of order $p^3$ is extraspecial and that, if $p$ is an odd prime, there exists a unique extraspecial group of order $p^3$ and exponent $p$, up to isomorphisms, namely the Heisenberg group with the presentation $$H=<x,y| x^p=y^p=1, [x,y]\in Z(H)>.$$
The structure theorem for extraspecial groups implies that every extraspecial group of order $p^{2m+1}$ and exponent $p$ satisfies all the properties of Prop. \ref{extra_structure} and it is unique up to isomorphisms (see \cite{Doerk}, Ch.A, \S 20, Thm. 20.5).
\end{remark}
\subsection{Some irreducible modules of an extraspecial group}\label{irr_module_extra}

We now want to give a description of certain irreducible modules of an extraspecial $p$-group over a finite field which shall be used to construct examples in the next Section. 

We fix two odd primes $p$ and $q$, with $p$ dividing $q-1$, such that the finite
field $\mathbb{F}_q$ contains a primitive $p$-th root of unity $\zeta$.
We consider the extraspecial group $H$ of order $p^3$, exponent $p$ and
generators $x$ and $y$ and we set $W=({\mathbb{F}_q})^p$.
Then $W$ has the structure of $H$-module via the following action: if
$\{e_1,\ldots,e_p\}$ is a basis for $W$, then $x$ permutes these elements
as a cycle of length $p$ and $y\cdot e_i=\zeta^i e_i$.

Now for every positive integer $m$ we denote by $G_m$ the extraspecial
group of order $p^{2m+1}$ and exponent $p$ satisfying Proposition \ref{extra_structure} and by $W_m$ the tensor product
of $m$ copies of $W$. $G_m$ is isomorphic to a quotient
of the direct product of $m$ copies of $H$, thus it acts on $W_m$, which
can be regarded as an $G_m$-module of dimension $p^m$ over $\mathbb{F}_q$.
We have the following proposition.
\begin{proposition}
$W_m$ is a faithful and absolutely irreducible $G_m$-module.
\end{proposition}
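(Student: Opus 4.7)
The plan is to bootstrap absolute irreducibility of $W_m$ from the case $m=1$ by a tensor product argument, and then to deduce faithfulness from the classical fact that any nontrivial normal subgroup of a $p$-group intersects the center nontrivially.

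For the base case $m=1$, I would first check that $W=\mathbb{F}_q^p$ is irreducible under $H$: the generator $y$ acts diagonally with the $p$ pairwise distinct eigenvalues $\zeta,\zeta^2,\dots,\zeta^p$, so any $H$-submodule is $y$-stable and hence a direct sum of the lines $\mathbb{F}_q e_i$; since $x$ permutes these lines cyclically, the only submodules are $0$ and $W$. Absolute irreducibility follows by a Schur-type calculation: any $H$-endomorphism commutes with $y$, hence is diagonal in the eigenbasis, and then commutes with the cyclic permutation $x$, which forces all diagonal entries to coincide. So the commutant is exactly $\mathbb{F}_q$.

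For the inductive step, a direct computation from $yx=\zeta\,xy$ shows that the generator of $Z(H)$ acts on $W$ as multiplication by a fixed primitive $p$-th root of unity $\xi\in\mathbb{F}_q^\ast$. Consequently, on $W_m=W^{\otimes m}$ the element $(z_1^{a_1},\dots,z_m^{a_m})\in E_1\times\cdots\times E_m$ acts as the scalar $\xi^{\sum a_i}$, which is trivial precisely when $\sum a_i\equiv 0\pmod p$, that is, exactly on $N_m$; hence the action of $E_1\times\cdots\times E_m$ on $W_m$ descends to a well-defined action of $G_m$. Now I would invoke the standard fact that, after extending scalars to $\overline{\mathbb{F}_q}$, the external tensor product of absolutely irreducible modules over a direct product of groups is absolutely irreducible. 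Applied to the $m$ copies of $H$, this yields the absolute irreducibility of $W_m$ as a $G_m$-module.

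For faithfulness, observe that $Z(G_m)$ is cyclic of order $p$, generated by the image of any $z_i$, and acts on $W_m$ as the nontrivial scalar $\xi$; in particular $Z(G_m)$ embeds into $\mathrm{GL}(W_m)$. Let $K$ be the kernel of the $G_m$-action on $W_m$; then $K$ is a normal subgroup of the $p$-group $G_m$, so if $K\ne 1$ it must meet $Z(G_m)$ nontrivially, contradicting the faithful central action just established. Therefore $K=1$. The step that I expect to require the most care is the computation of the central character of $W$, since it is what simultaneously guarantees that the $(E_1\times\cdots\times E_m)$-action factors through $G_m$ and that the center of $G_m$ acts faithfully; once that is in place, the rest is a straightforward assembly of classical results.
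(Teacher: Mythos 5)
Your proof is correct and follows the paper's approach: reduce to the rank-one case $W$ via the fact that tensor products of absolutely irreducible modules are absolutely irreducible, and compute $\operatorname{End}_H(W)\simeq\mathbb{F}_q$ by observing that commuting with the diagonal $Y$ forces diagonality and commuting with the cyclic permutation $X$ then forces scalarity. You additionally spell out two points the paper leaves implicit: that the $(E_1\times\cdots\times E_m)$-action on $W^{\otimes m}$ descends to $G_m$ because the central character kills exactly $N_m$, and the faithfulness argument via a nontrivial kernel having to meet $Z(G_m)$, which instead acts by a nontrivial scalar.
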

\begin{proof}
It is easy to see that the module $W_m$ is faithful for $G_m$. 
To prove that it is also absolutely irreducible we need to prove that that $W$ is an absolutely
irreducible $H$-module, since the tensor product of absolutely irreducible modules is absolutely
irreducible. This is equivalent to show that
$\text{End}_H(W)\simeq \mathbb{F}_q$.
Now a matrix $C$ belongs to $\text{End}_H(W)$ if and only if $C$ commutes
with the generators of $H$.
As before, we have that $H$ is generated by two elements $x$ and $y$ and
the matrices associated to this generators are the $p\times p$ matrices $X$ and $Y$ where:
\begin{enumerate}
 \item $X$ represents the cyclic permutation $(1,\ldots,p)$;
\item $Y$ is a diagonal matrix whose entries are the $p$ distinct
$p$-th roots of unity.
\end{enumerate}
We notice that $C\cdot Y=Y\cdot C$ if and only if $C$ is a diagonal
matrix, but since we must have $C\cdot X=X\cdot C$ then $C$ must be a
scalar matrix. Therefore $\text{End}_H(W)\simeq \mathbb{F}_q$.
\end{proof}

We end the section with a property on the intersection of all subgroups of an extraspecial group of bounded index; this result seems to be \emph{ad hoc} as it will be used only at the end of the next section to provide subfields of $\mathbb{Q}^{(d)}$ which cannot be generated by elements of bounded degree.
\begin{proposition}\label{luc}
Let $G$ be an extraspecial $p$-group of order $p^{2m+1}$. Then the
intersection of all subgroups of $G$ of index smaller then $p^{m}$ is
nontrivial.
\end{proposition}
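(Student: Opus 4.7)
The plan is to show that the center $Z(G)$ itself lies inside every subgroup of $G$ of index less than $p^m$, which immediately forces the intersection to contain $Z(G)$ and hence be nontrivial (since $|Z(G)|=p$ by the definition of extraspecial).

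First I would fix a subgroup $H\leq G$ with $[G:H]<p^m$, so that $|H|>p^{m+1}$, and argue by contradiction: suppose $Z(G)\not\subseteq H$. Since $|Z(G)|=p$ is prime, the only possibility is $H\cap Z(G)=1$. Then the canonical projection $\pi\colon G\twoheadrightarrow G/Z(G)$ restricts to a homomorphism $H\to G/Z(G)$ with trivial kernel, hence $H$ embeds into $G/Z(G)$.

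By Proposition \ref{extra_structure}(2), the quotient $G/Z(G)$ is elementary abelian, so every one of its subgroups is abelian. Therefore $H$ is abelian. But Proposition \ref{extra_structure}(4) tells us that every abelian subgroup of $G$ has order strictly less than $p^{m+1}$, which contradicts $|H|>p^{m+1}$. This contradiction shows $Z(G)\subseteq H$, and since $H$ was an arbitrary subgroup of index less than $p^m$, the nontrivial subgroup $Z(G)$ is contained in the intersection.

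There is no real obstacle here beyond correctly combining the two structural facts from Proposition \ref{extra_structure}: that $G/Z(G)$ is abelian (so the bound on abelian subgroups applies once $H$ is shown to be isomorphic to a subgroup of $G/Z(G)$), and the sharp bound $|A|<p^{m+1}$ on abelian subgroups, which is exactly what matches the threshold $[G:H]<p^m$ in the statement. The only minor point to verify carefully is the numerical inequality $|H|>p^{m+1}$, which follows from $|G|=p^{2m+1}$ and $[G:H]\leq p^{m-1}$.
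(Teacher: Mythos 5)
Your proof is correct and takes essentially the same approach as the paper: both arguments rest on the observation that $[G:H]<p^{m}$ forces $|H|>p^{m+1}$, so by Proposition \ref{extra_structure} the subgroup $H$ cannot be abelian and must therefore contain the order-$p$ subgroup $Z(G)=G'$, which then lies in the intersection. The only (cosmetic) difference is how that containment is extracted: the paper takes two non-commuting elements of $H$ and notes their commutator is a nontrivial element of $G'\cap H$, while you argue by contradiction, embedding $H$ into the elementary abelian quotient $G/Z(G)$ when $H\cap Z(G)=1$.
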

\begin{proof} Let $H$ be a subgroup of $G$ such that $[G:H]<
p^{m}$, we have that $|H|> p^{m+1}$.
In view of Proposition \ref{extra_structure}, $H$ cannot be abelian and
it contains two elements which do not commute. This implies that
$G'\cap H\neq {1}$, hence $G'\subseteq H$, since the commutator
subgroup is cyclic of order $p$.
\end{proof}

\section{The main theorem}\label{nonab}

In Section \ref{chap_ab} we proved that a for an infinite abelian extension of $\mathbb{Q}$ there is a list of properties which are equivalent to the property of having uniformly bounded local degrees.
We now want to prove that this is not true in general. The non equivalence is entailed by the existence of fields constructed using extraspecial groups and their modules. 
We now state the main result which will be proved throughout this section and which provides a significant generalization of Theorem 1.1. of \cite{Che}.
\begin{teononum}\label{main_theorem}
Let $K$ be an infinite algebraic Galois extension of $\mathbb{Q}$. Consider the following properties for $K$:
\begin{enumerate}[(a)]
\item\label{a}  $K$ has uniformly bounded local degrees at almost every prime;
\item\label{b} $K$ is contained in $\mathbb{Q}^{(d)}$ for any positive integer $d$;
\item\label{c} every finite subextension of $K$ can be generated by elements of bounded degree over $\mathbb{Q}$;
\item\label{d} there exists a positive integer $b$ such that, whenever $K$ can be written as a compositum of a family $\{K_m\}_m$ of finite Galois extensions, then for every $m$ the Galois group of $K_m$ has only minimal normal subgroups of order at most $b$.
\end{enumerate}
Then:
\begin{enumerate}[(1)]
\item\label{c_impli_b_impli_a} $(\ref{c})\Rightarrow (\ref{b})\Rightarrow (\ref{a})$;
\item\label{b_impli_d} $(\ref{b})\Rightarrow (\ref{d})$.
\end{enumerate}
However no other implication holds, that is:
\begin{enumerate}[(i)]
\item\label{d_noimpli_b} $(\ref{d})\not\Rightarrow (\ref{b})$;
\item\label{b_noimpli_c} $(\ref{b})\not\Rightarrow (\ref{c})$;
\item\label{a_noimpli_b} $(\ref{a})\not\Rightarrow (\ref{b})$.
\end{enumerate}
\end{teononum}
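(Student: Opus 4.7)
My plan is to first establish the direct implications, then build three separate counterexamples with the machinery of Section \ref{extraspecial}. The implication (\ref{c})$\Rightarrow$(\ref{b}) is immediate from the definition of $\mathbb{Q}^{(d)}$, and (\ref{b})$\Rightarrow$(\ref{a}) is Proposition \ref{boundFv}. For (\ref{b})$\Rightarrow$(\ref{d}), suppose $K\subseteq\mathbb{Q}^{(d)}$ and let $K_m$ be any finite Galois subextension with $G_m=\text{Gal}(K_m/\mathbb{Q})$. A Galois-correspondence argument gives $K_m=\mathbb{Q}(\alpha_1,\ldots,\alpha_r)$ with $[\mathbb{Q}(\alpha_i):\mathbb{Q}]\le d$, so the subgroups $H_i:=\text{Gal}(K_m/\mathbb{Q}(\alpha_i))$ satisfy $[G_m:H_i]\le d$ and $\bigcap_i H_i=1$, whence $\bigcap_i\text{Core}_{G_m}(H_i)=1$. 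For any minimal normal subgroup $N\neq 1$ of $G_m$, some $\text{Core}(H_i)$ does not contain $N$, so $N\cap\text{Core}(H_i)$ is a proper normal subgroup of $G_m$ sitting inside $N$ and hence trivial by minimality. Consequently $N\hookrightarrow G_m/\text{Core}(H_i)\hookrightarrow S_{[G_m:H_i]}\hookrightarrow S_d$ and $|N|\le d!$, proving (\ref{d}) with $b=d!$.

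For (\ref{b_noimpli_c}), I would take $K$ as the compositum, with pairwise disjoint ramification, of finite Galois extensions $K_m/\mathbb{Q}$ whose Galois groups are the extraspecial $p$-groups $G_m$ of Proposition \ref{extra_structure}, for a fixed odd prime $p$; the required realizations come from Shafarevich's theorem on solvable groups of odd order, or more concretely from the methods of Section \ref{realiz_extra}. Disjoint ramification forces $\text{Gal}(K/\mathbb{Q})\cong\prod_m G_m$. Since every maximal subgroup of $G_m$ has index $p$, each $K_m$ is generated by its degree-$p$ subfields and $K\subseteq\mathbb{Q}^{(p)}$, which is (\ref{b}). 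Conversely, Proposition \ref{luc} states that every family of subgroups of $G_m$ of index less than $p^m$ has nontrivial intersection, so $K_m$ cannot be generated by elements of degree $<p^m$, and (\ref{c}) fails as $p^m\to\infty$.

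For (\ref{a_noimpli_b}), I use the semidirect products $V_m:=W_m\rtimes G_m$ of Section \ref{irr_module_extra} with $p,q$ odd and $p\mid q-1$. The identity $(w,g)^n=((1+g+\cdots+g^{n-1})w,\,g^n)$ in the semidirect product yields $(w,g)^{pq}=(q\cdot(1+g+\cdots+g^{p-1})w,\,1)=(0,1)$, so $V_m$ has exponent dividing $pq$ uniformly in $m$. Realizing the $V_m$'s over $\mathbb{Q}$ (they are solvable of odd order) with disjoint ramification produces $K$ with $\text{Gal}(K/\mathbb{Q})\cong\prod_m V_m$ of exponent $pq$; by Theorem \ref{chebotarev} this gives uniformly bounded local degrees at every prime, so (\ref{a}) holds. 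But $W_m$ is a faithful absolutely irreducible $G_m$-module, hence a minimal normal subgroup of $V_m$ of order $q^{p^m}\to\infty$, so the already proven (\ref{b})$\Rightarrow$(\ref{d}) forbids $K\subseteq\mathbb{Q}^{(d)}$. Finally (\ref{d_noimpli_b}) is witnessed by the abelian extension $K=\mathbb{Q}(\zeta_{p^\infty})$: its Galois group $\mathbb{Z}_p^\times$ has only (almost) cyclic finite quotients, in which minimal normal subgroups have order at most $p$, so (\ref{d}) holds; but $\mathbb{Z}_p^\times$ has infinite exponent, so Theorem \ref{chebotarev} combined with Theorem \ref{ab_case} forbids (\ref{b}).

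I expect the main technical obstacle to be the inverse Galois step for the two non-abelian constructions: one must realize the $G_m$'s and the $V_m$'s over $\mathbb{Q}$ with ramification disjoint enough that the compositum's Galois group is genuinely $\prod_m G_m$ (respectively $\prod_m V_m$), so that the Proposition \ref{luc} obstruction inside each $G_m$ and the large minimal normal modules $W_m\trianglelefteq V_m$ actually survive in the inverse limit. This is precisely what the combination of Shafarevich's theorem and the explicit gluing of Section \ref{realiz_extra} is designed to supply; without such control the group-theoretic machinery would remain purely formal.
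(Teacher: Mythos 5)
Your direct implications (c)$\Rightarrow$(b)$\Rightarrow$(a), your example for (d)$\not\Rightarrow$(b), and your variant for (a)$\not\Rightarrow$(b) are essentially sound (using $W_m\rtimes E_m$ instead of the paper's $\mathbb{F}_q[E_m]\rtimes E_m$ is a legitimate simplification: $W_m$ is still a faithful irreducible module, so $W_m\rtimes 1$ is a minimal normal subgroup of order $q^{p^m}$, and your exponent computation is correct; linear disjointness is not even needed there, since $\mathrm{Gal}(K/\mathbb{Q})$ embeds into $\prod_m\mathrm{Gal}(K_m/\mathbb{Q})$ in any case). But your proof of (b)$\Rightarrow$(d) has a genuine gap at its first step: from $K\subseteq\mathbb{Q}^{(d)}$ you assert that a finite Galois subextension $K_m$ can be written as $\mathbb{Q}(\alpha_1,\ldots,\alpha_r)$ with $[\mathbb{Q}(\alpha_i):\mathbb{Q}]\le d$. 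Containment in $\mathbb{Q}^{(d)}$ only gives $K_m\subseteq L_1\cdots L_s$ with $[L_i:\mathbb{Q}]\le d$; it does not give generators of $K_m$ itself of degree $\le d$ --- that is property (c), and the very point of the non-implication (b)$\not\Rightarrow$(c), which you prove afterwards, is that these differ. Your core-and-$S_d$ argument is thus a correct proof of (c)$\Rightarrow$(d), not of (b)$\Rightarrow$(d). The paper bridges the gap by passing to Galois closures, so that $\mathrm{Gal}(L_1^{\mathrm{Gal}}\cdots L_s^{\mathrm{Gal}}/\mathbb{Q})$ is a subgroup of $H_1\times\cdots\times H_s$ with $|H_i|\le d!$ and $G_m$ is a quotient of it, and then proves Lemma \ref{normal}: a quotient of a subgroup of a direct product of groups of order $<c$ has no minimal normal subgroup of order $\ge c$. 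Without this (or an equivalent) your part (2), and hence the exclusion of (b) in your part (iii), remain unproven.

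Your counterexample for (b)$\not\Rightarrow$(c) contains a second genuine gap, indeed an internal contradiction. You claim that each $K_m$ with $\mathrm{Gal}(K_m/\mathbb{Q})=G_m$ extraspecial of order $p^{2m+1}$ "is generated by its degree-$p$ subfields", hence $K\subseteq\mathbb{Q}^{(p)}$. The compositum of the degree-$p$ subfields is the fixed field of the Frattini subgroup $\Phi(G_m)\supseteq Z(G_m)\neq 1$, so it is a proper subfield of $K_m$ --- this is exactly the obstruction of Proposition \ref{luc} that you invoke in the next sentence to refute (c), so the two claims cannot both hold. More importantly, an arbitrary (Shafarevich) realization of $G_m$ comes with no reason to lie in $\mathbb{Q}^{(d)}$ for a fixed $d$: membership in $\mathbb{Q}^{(d)}$ is not read off from the abstract Galois group. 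The paper's construction secures (b) differently: it first produces $m$ linearly disjoint extensions $F_1,\ldots,F_m$ of degree $p^3$ with Heisenberg Galois group (realizing $H^m$ and projecting, which is also how one legitimately gets infinitely many disjoint realizations out of Shafarevich, rather than just asserting "pairwise disjoint ramification"), and then takes $K_m=(F_1\cdots F_m)^{N_m}$ with $N_m$ as in Proposition \ref{extra_structure}; then $K_m\subseteq\mathbb{Q}^{(p^3)}$ automatically because it sits inside a compositum of degree-$p^3$ fields, while Proposition \ref{luc} still shows $K_m$ cannot be generated by elements of degree $<p^m$. Your proof needs this (or some substitute) to make (b) hold at all in that example.
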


\begin{remark}
 It is immediate to see that if $K$ satisfies (\ref{c}) then it also satisfies (\ref{b}) and, from Theorem \ref{boundFv}, it follows easily that $(\ref{c})\Rightarrow (\ref{b})\Rightarrow (\ref{a})$.
\end{remark}
As the proof of the result is quite long, we divide it into four different parts. 
\subsection{Proof of (2)}

We want to prove that (\ref{b}) implies (\ref{d}).
In order to do this we need the following lemma. 

\begin{lemma}\label{normal}
Let $G$ be a finite group with a minimal normal subgroup $W$
such that $|W|=m$. Suppose that $G$ is a quotient of a group $H$
where $H$ is a subgroup of a direct product $ H_1\times \ldots \times
H_s$.
Then $|H_i|\geq m$ for some index $i\in \{1,\ldots,s\}$
\end{lemma}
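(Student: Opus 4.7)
The plan is to argue by contradiction: assume $|H_i| < m$ for every $i$, and derive an impossibility by exhibiting $W$ as a section of $\prod_i A_i$ with each $|A_i| < m$. First I reduce to the subdirect case by replacing each $H_i$ with $\rho_i(H)$, the image of $H$ under the $i$-th projection; this only shrinks the factors. After the reduction the kernels $K_i = \ker(\rho_i|_H) \trianglelefteq H$ satisfy $H/K_i \cong H_i$ and $\bigcap_i K_i = 1$. Write $N = \ker(\pi : H \twoheadrightarrow G)$ and $W' = \pi^{-1}(W)$. The normal subgroup $Q_i := K_iN/N \trianglelefteq G$ has index $[G : Q_i] \leq [H : K_i] = |H_i| < m$, and the minimality of $W$ rules out $W \cap Q_i = 1$ (which would give an embedding $W \hookrightarrow G/Q_i$ and hence $m \leq [G:Q_i] < m$). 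Therefore $W \subseteq Q_i$, i.e.\ $W' \subseteq K_iN$, for every $i$.

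This containment, together with $N \subseteq W'$, gives $\rho_i(W') = \rho_i(N) =: A_i \leq H_i$ with $|A_i| < m$. The coordinate map $\psi : W' \to \prod_i A_i$, $w \mapsto (\rho_1(w), \ldots, \rho_s(w))$, is then a group homomorphism with kernel $W' \cap \bigcap_i K_i = 1$, and it sends $N$ to the diagonal image $\Delta \subseteq \prod_i A_i$. Consequently $W \cong W'/N$ is realized as a section of $\prod_i A_i$. Since $N, W' \trianglelefteq H$ and $W = W'/N$ is minimal normal in $G = H/N$, the group $W$ is a chief factor of $H$. To close the argument I would invoke the following chief-factor statement: \emph{every chief factor of a subdirect product of finite groups $A_1, \ldots, A_s$ has order at most $\max_i |A_i|$}. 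This forces $|W| \leq \max_i |A_i| < m$, contradicting $|W| = m$.

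The chief-factor bound itself I would prove by induction on $s$. Given a chief factor $V/U$ of a subdirect product in $\prod_{i=1}^s A_i$, one case-splits on whether $\rho_i(V) \neq \rho_i(U)$ for some $i$: in that case the proper normal subgroup $(V \cap K_iU)/U$ of the minimal normal subgroup $V/U$ must be trivial by minimality, whence $V/U$ embeds into $H/K_iU$ of order at most $|A_i|$. In the opposite case, where $\rho_i(V) = \rho_i(U)$ for all $i$, the equality $V = U \cdot (V \cap K_i)$ produces an $H$-equivariant isomorphism $V/U \cong (V \cap K_i)/(U \cap K_i)$, which re-realizes the same chief factor inside the smaller subdirect product $K_i \leq \prod_{j \neq i} \rho_j(K_i)$; the inductive hypothesis then applies.

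The main obstacle I expect is the second case of this induction: the section $(V \cap K_i)/(U \cap K_i)$ is a priori only $H$-invariant, not $K_i$-minimal normal, so passing from the chief-factor bound for $K_i$ to a bound on the full section requires additional structural input. The key is the characteristic simplicity of $V/U \cong T^k$ together with the transitivity of the $H$-action on the $k$ simple factors; combining this with the fact that $[H : K_i] \leq |H_i|$ constrains the $K_i$-orbit structure on $\{T_1, \ldots, T_k\}$, and this is where the full strength of $W$ being \emph{minimal} normal, rather than merely normal, is finally exploited to close the induction.
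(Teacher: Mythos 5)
Your reduction to the subdirect case, and the use of minimality to show $W \subseteq Q_i := K_iN/N$ for each $i$, are both correct and mirror the core mechanism of the paper's argument. You also correctly identify $W \cong W'/N$ as a chief factor of $H$, which is the right structural fact. The trouble is in how you try to exploit it.

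The proof as written is incomplete, and you flag the gap yourself: in the second case of your induction for the chief-factor bound, the section $(V\cap K_i)/(U\cap K_i)$ is $H$-invariant but need not be a chief factor of $K_i$, so the inductive hypothesis does not apply, and the sketch involving $T^k$ and transitivity of the $H$-action is a conjecture rather than a proof. There is also a mismatch in how the lemma is to be applied: after building the injection $\psi : W' \to \prod_i A_i$ you would need $W'/N$ to be a chief factor \emph{of} $\psi(W') \leq \prod_i A_i$, but minimal-normality of $W'/N$ in $H/N$ only makes it a chief factor of $H$, not of the smaller group $W'$. In fact the whole detour through $A_i = \rho_i(N)$ and the claim $W' \subseteq K_iN$ is unnecessary: since $H$ itself is subdirect in $\prod_i H_i$ with all $|H_i| < m$, one should apply the desired bound directly to the chief factor $W'/N$ of $H$.

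What is missing is also simpler than what you attempted, and it is exactly the paper's proof. Running minimality against each $Q_i$ separately only gives $W \subseteq \bigcap_i Q_i$, and $\bigcap_i (K_iN)$ can strictly contain $N$ (e.g.\ $H = \mathbb{Z}/2\times\mathbb{Z}/2$ with $N$ the diagonal), so this does not force $W = 1$. Instead one should run the same argument down the chain $M_0 = H \geq M_1 \geq \ldots \geq M_s = 1$ with $M_j := K_1 \cap \ldots \cap K_j$. Each $M_j$ is normal in $H$, each factor $M_{j-1}/M_j$ embeds into $H_j$ (hence has order $< m$), and $M_s = \bigcap_i K_i = 1$. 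Setting $G_j := \pi(M_j)$ gives a chain of normal subgroups of $G$ with $G_0 = G$, $G_s = 1$, and $[G_{j-1}:G_j] < m$; your own minimality step now shows by induction on $j$ that $W \subseteq G_j$, and taking $j=s$ gives $W = 1$, a contradiction. (The paper's ``tail'' subgroups $H \cap (1\times\ldots\times 1 \times H_j \times \ldots \times H_s)$ are exactly these $M_{j-1}$.) The same chain also yields a two-line proof of the chief-factor bound you invoked: any chief series of $H$ refines the normal series $\{M_j\}$, so every chief factor of $H$ has order dividing some $|M_{j-1}/M_j| \leq |H_j|$.
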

\begin{proof}
We suppose that $G=H/N$ with $H\leq H_1\times \ldots \times H_s$
and that $|H_i|<m$ for every $i$.
Let us denote by $\pi:H\rightarrow H/N$ the projection map.
We set $H_i:=H \cap 1\times \ldots \times 1\times H_i \times
\ldots \times H_r$ and we denote by $G_i=\phi(H_i)$ the image of
$H_i$ in $G$.  We notice that the $G_i$'s are all normal subgroups of
$G$.

We want to show by induction that $W\subseteq G_i$ for every $i$.

For $i=1$ this holds by assumption, since $G_1=G$.

Suppose it is true for $G_{i-1}$. Now $W\subseteq G_{i-1}$ and
$[G_{i-1}:G_i] <m$, thus $U:=G_i\cap W$ is normal and non trivial and so
it contains $W$. Therefore $W\subseteq G_i$. In particular $W\subseteq
G_s$, which is a contradiction, since $|G_s|\leq |H_s|<m$.
\end{proof}

Now we are able to prove our result. We suppose that $K\subseteq \mathbb{Q}^{(d)}$ for some integer $d$. We write $K$ as the compositum of all its finite Galois subextensions $\{K_m\}_m$ and we set $G_m=\text{Gal}(K_m/\mathbb{Q})$. We suppose moreover that $K$ does not satisfy property (\ref{d}).
This means that 
there exists a strictly increasing sequence of positive
integer $\{c_m\}_m$ such that $G_m$ has a minimal normal subgroup of order $c_m$. 

Thus, in view of Lemma \ref{normal}, whenever $G_m$
is isomorphic to a quotient $H/N$ and $H$ is a subgroup of a direct
product $H_1 \times \ldots \times H_s$, then $|H_i|\geq c_m$ for at
least one index $i$.

We fix $m$ such that $d!<c_m$. We notice that, since $K$ is contained in $\mathbb{Q}^{(d)}$, so is $K_m$; then there exist number
fields $L_1,\ldots,L_s$ such that $K_m\subseteq L_1\ldots L_s$ and
$[L_i:\mathbb{Q}]\leq d$.

We denote by $L_i^{\text{Gal}}$ the Galois closure of $L_i$ in
${\mathbb{Q}}^{\text{alg}}$ and by
$H_i=Gal(L_i^{\text{Gal}}/\mathbb{Q})$ its Galois group. We notice
that $|H_i|\leq d!<c_m$ by hypothesis.

We have $K_m\subseteq L_1^{\text{Gal}}\ldots L_s^{\text{Gal}}$ and 
$\text{Gal}(L_1^{\text{Gal}}\ldots L_s^{\text{Gal}})$ is isomorphic to a subgroup
of $H_1\times \ldots \times H_s$ via the restriction map.

Since $K_m$ is a Galois extension
$$G_m=\text{Gal}(K_m/\mathbb{Q})\simeq
\text{Gal}(L_1^{\text{Gal}}\ldots L_s^{\text{Gal}})/N$$ for some
normal subgroup $N$ of $\text{Gal}(L_1^{\text{Gal}}\ldots
L_s^{\text{Gal}})$.

Now $\text{Gal}(L_1^{\text{Gal}}\ldots L_s^{\text{Gal}})$ is a
subgroup of $ (H_1\times \ldots \times H_s)$ with $|H_i|\leq c_m$
for every $i$, which is a contradiction.
Thus $K$ satisfies (\ref{d}).

\subsection{Proof of (i)}

We have to prove that condition (\ref{d}) does not imply condition (\ref{b}). 
In order to do this we fix a prime $p$ and we consider a family $\{K_m\}_m$ of finite Galois extensions of  $\mathbb{Q}$  such that $G_m=\text{Gal}(K_m/\mathbb{Q})=\mathbb{Z}/p^m \mathbb{Z}$ is the cyclic group of order $p^m$.
We denote by $K$ the compositum of all the $K_m$\rq{}s.

We notice that $\exp(\text{Gal}(K/\mathbb{Q}))$ is not finite, since the group $\text{Gal}(K/\mathbb{Q})$ contains elements of order $p^m$ for every $m$. 
In view of Theorem \ref{chebotarev}, $K$ cannot have uniformly bounded local degrees and therefore it is not contained in $\mathbb{Q}^{(d)}$. 

However, for every $m$,  every minimal normal subgroup of $G_m$ has order $p$ and property (\ref{d}) holds.

\subsection{Proof of (ii)}

We want to show that condition (\ref{b}) does not imply condition (\ref{c}). 
We fix an odd prime number $p$ and we now consider an infinite family $\mathcal{F}=\{F_i\}_i$ of linearly disjoint Galois extensions of $\mathbb{Q}$, where, for every $i$,  $\text{Gal}(F_i/\mathbb{Q})=H$ is the extraspecial group of order $p^3$ and exponent $p$. 

We know that such a family exists: in fact a famous theorem of Shafarevich (see \cite{Shaf2}) asserts that every solvable group of odd order can be realized over $\mathbb{Q}$.
Now, for every integer $n$, the group $H^n=H\times \ldots \times H$, being a $p$-group with $p$ odd, is solvable and hence realizable. We denote by $F$ its realization, that is $F$ is a Galois extension of $\mathbb{Q}$ with $\text{Gal}(F/\mathbb{Q})=H^n$. We notice that $H^n$ has $n$ normal subgroups $H_1,\ldots, H_n$ isomorphic to $H^{n-1}$ whose intersection is trivial and $H^n/H_i=H$ for all $i$\rq{}s. Thus $F^{H_1}, \ldots, F^{H_n}$ are $n$ linearly disjoint realization of $H$ over $\mathbb{Q}$ and we can make this construction for every integer $n$. 

Now we consider the family of finite Galois extensions $\{K_m\}_m$ constructed in the following way: for every $m$, denote by  $L_m=F_1\cdot \ldots \cdot F_m$ the compositum of $m$ fields of the family $\mathcal{F}$ . By construction $$\text{Gal}(L_m/\mathbb{Q})=H\times \ldots \times H=H^m.$$
Now we consider the normal subgroup $N_m$ of $H^m$ described in Proposition \ref{extra_structure} such that $H^m/N_m$ is extraspecial of order $p^{2m+1}$ and we denote by $K_m:=L_m^{N_m}$ the subfield of $L_m$ fixed by this subgroup.
We have that $K_m/\mathbb{Q}$ is a finite Galois extension with Galois group an extraspecial group of order $p^{2m+1}$ and exponent $p$.

We denote by $K$ the compositum  of all the fields $\{K_m\}_m$.
Since by construction every field $K_m$ is contained in $\mathbb{Q}^{(p^3)}$, therefore $K\subseteq \mathbb{Q}^{(p^3)}$ and (\ref{b}) holds for $K$.

However, $K$ does not satisfy (\ref{c}). In fact suppose that every $K_m$ can be generated by elements of degree smaller than $d$, that is $K_m$ equals the compositum of all its subextensions of degree at most $d$ over $\mathbb{Q}$.

We fix $m$ such that $d<p^m$ and we denote by $\{H_{m,i}\}_i$
the family of all subgroups of $G_m$ of index smaller than
$c_m:=p^{m}$ in $G_m$. Then, by the Galois correspondence,
$\{K_m^{H_{m,i}}\}_i$ is the family of all subextensions of $K_m$ of
degree smaller than $c_m$ over $\mathbb{Q}$.
We notice that the compositum $F_m:=\prod_{i}K_m^{H_{m,i}}$ is
strictly contained in $K_m$ since, by Proposition \ref{luc}, we have
$$Z(G_m)\subseteq \bigcap_{i}H_{m,i}=Gal\left(K_m/F_m\right)\neq 1.$$
This contradicts the hypothesis on $K_m$.
\begin{remark}\label{construction_extra}
To avoid the use of Shafarevich\rq{}s result,
we give, in Section \ref{realiz_extra}, an explicit realization of the family $\mathcal{F}$ and of the fields $K_m$\rq{}s.
\end{remark}
\subsection{Proof of (iii)}

We now want to prove that (\ref{a}) does not imply (\ref{b}), that is we have to show the existence of an extension $K/\mathbb{Q}$
with uniformly bounded  local degrees such that, $K$ is not contained in $\mathbb{Q}^{(d)}$ for any integer $d$.

In view of Theorem \ref{chebotarev} and property (\ref{b_impli_d}), in  order to construct such a field, we have to find a family of finite Galois extensions $\{K_m\}_m$ such that, setting $G_m:=\text{Gal}(K_m/\mathbb{Q})$, the family of groups $\{G_m\}_m$ satisfies the following conditions: 
\begin{enumerate}\label{2prop}
\item\label{cond1} $\text{exp}(G_m)$ is bounded by a constant which does
not depend on $m$;
\item\label{cond2} there exists a strictly increasing sequence of positive
integer $\{c_m\}_m$ such that $G_m$
has a minimal normal subgroup of order $c_m$.
\end{enumerate}
Then the compositum $K$ of the fields $K_m$\rq{}s has the desired property.

In fact condition (\ref{cond1}) implies that $\exp(\text{Gal}(K/\mathbb{Q}))$ is finite, which is equivalent to the fact that $K$ has uniformly bounded local degrees. As for condition (\ref{cond2}), it implies that (\ref{b}) does not hold for $K$.
The construction of $K$ goes as follows.

We take $p$ and $q$ two odd primes such that $p$ divides $q-1$. 
We denote by $E_m$ the extraspecial group of order $p^{2m+1}$ and exponent $p$ satisfying Proposition \ref{extra_structure} and by $\mathbb{F}_q(E_m)$ its regular representation, that is
$$\mathbb{F}_q(E_m)=\{\sum_{g\in E_m}a_g g|a_g\in \mathbb{F}_q\}.$$
Now we consider the family of finite groups
$\{G_m\}_{m\geq 1}$ where $G_m=\mathbb{F}_q(E_m)\rtimes E_m$
and the semidirect product is taken via the natural action by translation of
$E_m$ on $\mathbb{F}_q(E_m)$.
We can now prove the following lemma.
\begin{lemma}\label{prop3}
The family $\{G_m\}_{m\geq 1}$ satisfies condition (\ref{cond1}) with bound $pq$ and condition (\ref{cond2}) with
$c_m= q^{p^{m}}$.
\end{lemma}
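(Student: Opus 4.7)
The lemma has two independent parts: the uniform exponent bound in (\ref{cond1}), and the construction of a minimal normal subgroup of order $q^{p^m}$ in (\ref{cond2}). I would handle them separately.

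For (\ref{cond1}) the plan is a direct computation in $G_m = \mathbb{F}_q(E_m) \rtimes E_m$. With multiplication $(v_1,g_1)(v_2,g_2) = (v_1 + g_1\cdot v_2,\, g_1 g_2)$, induction gives
\[
(v,g)^k \;=\; \Bigl(\sum_{i=0}^{k-1} g^i\cdot v,\;\; g^k\Bigr).
\]
Taking $k = pq$, the second coordinate is trivial because $E_m$ has exponent $p$ by Proposition \ref{extra_structure}; the first coordinate, using $g^p = 1$, collapses to $q\sum_{i=0}^{p-1} g^i\cdot v$, which vanishes since $\mathbb{F}_q(E_m)$ is an $\mathbb{F}_q$-vector space. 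Hence $\exp(G_m)$ divides $pq$.

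For (\ref{cond2}) I would locate the module $W_m$ from Section \ref{irr_module_extra} inside the regular representation $\mathbb{F}_q(E_m)$ and show it is a minimal normal subgroup of $G_m$. Since $\gcd(|E_m|, q) = \gcd(p^{2m+1}, q) = 1$, Maschke's theorem makes $\mathbb{F}_q(E_m)$ a semisimple $E_m$-module, and the standard fact that every irreducible $\mathbb{F}_q[E_m]$-module appears in the regular representation gives an embedding $W_m \hookrightarrow \mathbb{F}_q(E_m)$ of $E_m$-modules. This realizes $W_m$ as an abelian normal subgroup of $G_m$ of order $q^{p^m}$.

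Minimality is then immediate: a subgroup $N \le W_m$ is automatically centralized by the abelian group $\mathbb{F}_q(E_m)$, so $N \trianglelefteq G_m$ is equivalent to $N$ being stable under the $E_m$-action, i.e.\ to $N$ being an $E_m$-submodule of $W_m$; by the absolute irreducibility proved in Section \ref{irr_module_extra}, the only options are $N = 0$ or $N = W_m$. Setting $c_m := q^{p^m}$ then verifies (\ref{cond2}). I do not expect a real obstacle: the exponent bound is a single semidirect-product calculation, and the minimal-normal-subgroup claim reduces to the irreducibility of $W_m$, which has already been established.
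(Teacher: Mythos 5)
Your proposal is correct and follows essentially the same route as the paper: Maschke's theorem to embed $W_m$ in the regular representation, identification of $W_m\rtimes 1$ as a normal subgroup of order $q^{p^m}$, and irreducibility of $W_m$ for minimality. The only difference is that you spell out the semidirect-product exponent computation and the equivalence between normality in $G_m$ and $E_m$-submodule stability, which the paper simply asserts; these added details are accurate.
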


\begin{proof}
Since $\exp(G_m)=pq$ for every $m$, condition (\ref{cond1}) holds.

As for condition (\ref{cond2}), we consider the absolutely irreducible $E_m$-module $W_m$ constructed in Section \ref{irr_module_extra}.  Since $q$ and $p$ are different primes, by Maschke\rq{}s theorem (see \cite{Doerk}, Ch.A, \S 11, Thm. 11.5), the group algebra $\mathbb{F}_q(E_m)$ is a semisimple algebra. In particular the irreducible $E_m$-module $W_m$ appears as a direct summand of $\mathbb{F}_q(E_m)$. 
By abuse of notation, we denote by $W_m$ a subgroup of $\mathbb{F}_q(E_m)$ isomorphic to $W_m$ and  by $H_m$ the subgroup of $G_m$ given by $H_m:=W_m\rtimes 1$.

Then
the subgroup $H_m$ is a normal
subgroup of $G_m$ and it is also minimal, being isomorphic to $W_m$, which is an
irreducible module. Moreover it has order $|H_m|=q^{p^m}$ and therefore condition (\ref{cond2}) is true with $c_m=|H_m|=q^{p^m}$.
\end{proof}

Now, having constructed a family of groups which satisfy the
desired properties, we want to find a family of number fields realizing these groups. 

It is easy to show that such a family exists: in fact, the groups $G_m$\rq{}s are $pq$-groups and, by Burnside's Theorem
(see \cite{Doerk}, Ch. I, \S 2, p.210), they are
solvable. Therefore, in view of Shafarevich's Theorem (see \cite{Shaf2}), they can be realized over $\mathbb{Q}$ (recall that  $p$ and $q$ are odd primes), that is
there exists a family of number fields $\{K_m\}_{m\geq 1}$ such
that, for every $m$, $\text{Gal}(K_m/\mathbb{Q})\simeq G_m$ and we
denote by $K$ the compositum of this family. Then $K$ satisfies (\ref{a}), but not (\ref{b}) of Thereom \ref{main_theorem}.
\begin{remark}\label{construction_group_algebra}
Shafarevich\rq{}s result is not necessary to prove the existence of such a family, since this will be constructed explicitly in Section \ref{realiz_group_algebra}.
\end{remark}
Even if Theorem \ref{chebotarev} is sufficient to prove that the constructed field $K$ has uniformly bounded local degrees, one may be interested in knowing an effective bound for the local degrees of $K$.

In order to do this, we fix a prime number $\ell$. Since every extraspecial $p$-group $E_m$ has derived length $2$ and the group algebra $\mathbb{F}_q[E_m]$ is abelian, then the completion of every $K_m$ at a prime above $\ell$ is a Galois extension with Galois group of derived length at most $4$: in fact the Galos group of the tame part of the extension is metabelian and the first ramification group is abelian for $\ell\neq p$ and it is metabelian for $\ell=p$, being a subgroup of $E_m$. 

Since $p<q$, applying Theorem \ref{bound_derived} we obtain that an explicit bound for the local degrees of $K$ is given by $$q^{2(q^{2 q^6+10}+q^6+7)}.$$ 

However this bound can be significantly improved via the following considerations. 
We fix again a prime $\ell$ of $\mathbb{Q}$ and we denote by $v$ a valuation of $K_m$ above $\ell$. We have to discuss three cases.
\begin{enumerate} 
\item If $\ell\neq p,q$ then, for every $m$, the extension $K_{m,v}/\mathbb{Q}_\ell$ is tame and it is contained in the compositum of all metabelian extensions of $\mathbb{Q}_\ell$ of exponent dividing $pq$, which has degree bounded by $q^6$.

Then the local degrees of $K$ at prime $\ell\neq p, q$ are bounded by $q^6$.
\item If $\ell=q$ the unramified part of the extension is contained in the compositum $L^{\text{ur}}$ of all unramified extensions of $\mathbb{Q}_q$ of degree dividing $pq$ and this extension has degree bounded by $pq$. 

The tame and totally ramified part of the extension, is either trivial is cyclic of degree $p$. Therefore it is contained in the compositum $L^{\text{tame}}$ of all cyclic extensions of $L^{\text{ur}}$ of degree $p$, which has degree  bounded again by $p^2$.  

The wild ramified part, having Galois group which is a subgroup of $\mathbb{F}_q[E_m]$ and therefore elementary abelian, is contained in the compositum of all $q$-elementary abelian extensions of $L^{\text{tame}}$, which has degree over $L^{\text{tame}}$ at most $q^{p^{3}q+2}$. 

Therefore the local degree of $K$ at $q$ is at most $q^{p^{3}q+3}p^3<q^{q^4 +6}$. 
\item If $\ell=p$ we denote by $H_m$ the Galois group of the local extension $K_{m,v}/\mathbb{Q}_p$. Since $\text{Gal}(K_m/\mathbb{Q})=\mathbb{F}_q[E_m]\rtimes E_m$ is a $pq$-group with a unique $q$-Sylow subgroup $\mathbb{F}_q[E_m]$, which is normal, then $H_m$ also possesses a unique $q$-Sylow subgroup $Q_m$. 

By Schur-Zassenhaus\rq{}s theorem, since $p$ and $q$ are different primes, $H_m=Q_m\rtimes P_m$, where $P_m$ is the $p$-Sylow subgroup, $Q_m\leq \mathbb{F}_q[E_m]$, $P_m\leq E_m$ and the action of $P_m$ on $Q_m$ is induced by the action of $E_m$ on $\mathbb{F}_q[E_m]$ by restriction. 

Then the extension $K_{m,v}^{Q_m}/\mathbb{Q}_p$, having metabelian Galois group $P_m$, is contained in the compositum $L$ of all metabelian $p$-extensions of $\mathbb{Q}_p$, which has degree $[L:\mathbb{Q}_p]\leq p^{p^2+4}\leq q^{q^2+4}$. 

The extension $K_{m,v}/K_{m,v}^{Q_m}$ is an abelian $q$-extension of a $p$-adic field with $(p,q)=1$, thus it is contained in the compositum $L\rq{}$ of all tamely ramified extensions of $L$ of degree $q$ and we have $[L\rq{}:L]\leq q^2$.

Then the local degree of $K$ at $p$ is bounded by $q^{q^2+6}$.
\end{enumerate}
Summing up these results, $K$ has local degrees bounded by $q^{q^4+6}$.

\section{Explicit constructions}\label{realiz_extra}
In Section \ref{nonab} we saw that the existence of certain Galois extensions with special properties was among the basic ingredients of the proof of our main theorem; this was done by using Shafarevich\rq{}s Theorem on the realizability of solvable groups of odd order over $\mathbb{Q}$.

This section provides explicit realizations of those groups. 
These constructions are interesting for two reasons: firstly they permit to avoid the use of Shafarevich\rq{}s result. 
Secondly, coming back to one of the original motivations of this work, Widmer proves in \cite{Widmer} that the Northcott property for an infinite algebraic extension $K$ of $\mathbb{Q}$ is strictly related to the behavior of the discriminants of certain finite subextensions of $K$. Therefore, knowing how to concretely construct  fields described in Section \ref{nonab} could be a step towards understanding  whether these extensions might have the Northcott property or not.

Section \ref{realiz_extraspecial} is devoted to describe a method, based on results in Michailov\rq{}s paper \cite{Micha}, to realize extraspecial $p$-groups over a field of characteristic zero containing a primitive $p$-th root of unity. This entails the existence of the family of number fields used in the proof of  (\ref{b_noimpli_c}) of Theorem \ref{main_theorem}, as pointed out in Remark \ref{construction_extra}. In Section \ref{realiz_group_algebra} we use a method described by Serre in \cite{Serre} to solve embedding problems with abelian kernel. This result, together with the construction of Section \ref{realiz_extraspecial}, provides a realization of the family of Galois extensions needed to prove part (\ref{a_noimpli_b}) of the main theorem, as announced in Remark \ref{construction_group_algebra}. 

\subsection{Realizing extraspecial $p$-groups of exponent $p$}\label{realiz_extraspecial}

In this section we generalize a method described in \cite{Micha} which can be used to realize extraspecial $p$-groups of exponent $p$ over fields of characteristic zero containing a primitive $p$-th root of unity. We have the following theorem.
\begin{theorem}\label{teo_realize_extra}
Let $k$ be a field of characteristic zero containing a primitive $p$-th root of unity $\zeta_p$. Let $a_1,b_1,\ldots,a_m,b_m\in k$  be elements defined in the following way.
\begin{enumerate}[(i)]
\item\label{a_i} For every $i$ the element $a_i$ is not a $p$-th power in $k$ and the ideal $(a_i)$ is prime to the ideals $(a_j)$ and $(b_j)$ for every $j<i$.
\item\label{x_i} Denote by $\sigma_i$ the generator of $\text{Gal}(k(\sqrt[p]{a_i})/k)$ such that $\sigma_i(\sqrt[p]{a_i})=\zeta_p \sqrt[p]{a_i}$. Let $\mathfrak{q}_i$ be a prime of $k$ which splits completely in the extension $k(\sqrt[p]{a_i})/k$ and such that $\mathfrak{q}_i\neq \mathfrak{q}_j$ for every $j<i$. 

Denote by $\beta_1^i,\ldots, \beta_p^i$ the primes of $k(\sqrt[p]{a_i})$ above $\mathfrak{q}_i$ and  by $S_i$ the set of primes of $k(\sqrt[p]{a_i})$ lying above the primes of $k$ dividing $(a_j)$ or $(b_j)$ for $j<i$. Let $x_i\in k(\sqrt[p]{a_i})$ be a solution of the system
\begin{equation}\label{eq_x}
\left\{\begin{array}{cccc} v_{\beta_1^i}(X)=1  & &\\  v_{\beta_j^i}(X)=0  & \forall j\neq 1\\
v_{\beta}(X)=0 & \forall \beta\in S_i.
\end{array}\right.
\end{equation}
and set $b_i:= N_{\sigma_i}(x_i)$, where $N_{\sigma_i}$ is the norm function of the extension $k(\sqrt[p](a_i))/k$.
\end{enumerate}
Finally consider the elements $\omega_1,\ldots,\omega_m$ defined as $$\omega_i:=\frac{b_i^{p-1}}{\prod_{j=i}^{p-1}\sigma_i^j(x_i^j)}.$$
Then:\begin{enumerate}[(a)]\item\label{c1} $\{k(\sqrt[p]{a_i},\sqrt[p]{b_i}, \sqrt[p]{\omega_i}))\}_i$ is a family of linearly disjoint Galois extensions of $k$ and, for every index $i$, the group $H_i:=\text{Gal}(k(\sqrt[p]{a_i},\sqrt[p]{b_i}, \sqrt[p]{\omega_i}))/k)$ is isomorphic to the extraspecial group of order $p^3$ and exponent $p$;\item\label{c2} the extension $k(\sqrt[p]{a_1},\sqrt[p]{b_1},\ldots,\sqrt[p]{a_m}, \sqrt[p]{b_m}, \sqrt[p]{\omega_1\ldots \omega_m})/k$ is a Galois extension with Galois group the extraspecial group of order $p^{2m+1}$ and exponent $p$.
\end{enumerate}
\end{theorem}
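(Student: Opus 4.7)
The plan mirrors the two parts of the statement. First, for each $i$, I would prove that $F_i := k(\sqrt[p]{a_i}, \sqrt[p]{b_i}, \sqrt[p]{\omega_i})$ is Galois over $k$ with group the Heisenberg group $H$ of order $p^3$ and exponent $p$, following the construction of \cite{Micha}. Then I would show the $F_i$ are linearly disjoint over $k$, yielding $\text{Gal}(F_1 \cdots F_m / k) \cong H^m$. Finally, I would identify the field $M := k(\sqrt[p]{a_1}, \sqrt[p]{b_1}, \ldots, \sqrt[p]{a_m}, \sqrt[p]{b_m}, \sqrt[p]{\omega_1 \cdots \omega_m})$ of part (b) with the fixed field of $F_1 \cdots F_m$ under the central subgroup $N_m$ from Proposition \ref{extra_structure}(3), so that $\text{Gal}(M/k) \cong H^m / N_m \cong G_m$, the extraspecial group of order $p^{2m+1}$.

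For the Heisenberg block, Kummer theory gives $[k(\sqrt[p]{a_i}):k] = p$ since $a_i$ is not a $p$-th power. The essential valuation computation is $v_{\mathfrak{q}_i}(b_i) = 1$: because $\mathfrak{q}_i$ splits completely in $k(\sqrt[p]{a_i})/k$, the generator $\sigma_i$ transitively permutes the primes $\beta^i_1, \ldots, \beta^i_p$, so $v_{\beta^i_j}(b_i) = \sum_{k=0}^{p-1} v_{\sigma_i^{-k}(\beta^i_j)}(x_i) = 1$ for every $j$ by the defining system \eqref{eq_x}, and unramified complete splitting gives $v_{\mathfrak{q}_i}(b_i) = 1$. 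Hence $b_i$ is not a $p$-th power in $k(\sqrt[p]{a_i})$, and $k(\sqrt[p]{a_i}, \sqrt[p]{b_i})/k$ is Galois of degree $p^2$ with group $(\mathbb{Z}/p)^2$. To lift to the Heisenberg cover by adjoining $\sqrt[p]{\omega_i}$: the Brauer obstruction to the central extension $1 \to \mathbb{Z}/p \to H \to (\mathbb{Z}/p)^2 \to 1$ vanishes precisely because $b_i$ is a norm from $k(\sqrt[p]{a_i})$, and the explicit form of $\omega_i$ is the Kummer witness to this vanishing; a direct cocycle computation, carried out as in \cite{Micha}, then shows that $F_i/k$ is Galois with non-abelian Galois group of order $p^3$ and exponent $p$, hence isomorphic to $H$.

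For linear disjointness, I would induct on $i$. Conditions (\ref{a_i}) and (\ref{x_i}) ensure that $b_i$ has trivial valuation at any prime of $k$ dividing some $(a_j)$ or $(b_j)$ with $j < i$ (because the set $S_i$ is $\sigma_i$-stable, so the conjugates of $x_i$ vanish on the same primes), and that $\mathfrak{q}_i$ is coprime to every prior $(a_j), (b_j)$ and $(\omega_j)$. Consequently $\mathfrak{q}_i$ is unramified in $F_1 \cdots F_{i-1}$ but ramified in $F_i$ (through $\sqrt[p]{b_i}$), forcing $F_i \cap (F_1 \cdots F_{i-1}) = k$. For part (b), $M \subseteq F_1 \cdots F_m$, and the subgroup of $H^m$ fixing $M$ lies in $\text{Gal}(F_1 \cdots F_m / L) = Z(H^m) = \prod_i \langle z_i \rangle$, where $L := k(\sqrt[p]{a_1}, \sqrt[p]{b_1}, \ldots, \sqrt[p]{a_m}, \sqrt[p]{b_m})$; since the central generator $z_i$ of the $i$-th factor scales $\sqrt[p]{\omega_i}$ by $\zeta_p$ and fixes the other $\sqrt[p]{\omega_j}$, the tuple $(z_1^{e_1}, \ldots, z_m^{e_m})$ fixes $\sqrt[p]{\omega_1 \cdots \omega_m}$ iff $\sum e_i \equiv 0 \pmod p$, which is exactly the subgroup $N_m$ of Proposition \ref{extra_structure}(3). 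Hence $\text{Gal}(M/k) \cong H^m/N_m \cong G_m$. The principal obstacle is the Heisenberg cocycle verification in the middle step; I would import it essentially verbatim from \cite{Micha}, while all remaining steps amount to arithmetic bookkeeping with valuations, Kummer theory, and the structure of extraspecial groups.
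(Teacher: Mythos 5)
Your overall architecture matches the paper's: invoke Michailov for the individual degree-$p^3$ blocks, establish linear disjointness of the $F_i$, and then identify $M$ with the fixed field of the subgroup $N_m \subseteq H^m$. Part (b) is essentially the paper's argument with a cosmetic change (you identify $\text{Gal}(F_1\cdots F_m/M)$ with $N_m$ directly, the paper shows $M \subseteq k_m^{N_m}$ and counts degrees; either is fine, and both rest on the same computation of how $z_i$ acts on $\sqrt[p]{\omega_i}$). The valuation computation $v_{\mathfrak{q}_i}(b_i)=1$ is a nice explicit supplement to the Michailov citation.

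Where you diverge genuinely, and where the gap lies, is the linear disjointness step. The paper argues via condition (i): $(a_m)$ is coprime to every prior $(a_j),(b_j)$, the valuation constraints on $x_m$ keep $(b_m)$ coprime to those same ideals, and this yields $k(\sqrt[p]{a_m},\sqrt[p]{b_m}) \cap k(\sqrt[p]{a_i},\sqrt[p]{b_i}) = k$, after which the extraspecial structure (unique minimal normal subgroup $= Z$) is used to lift to full disjointness of the $M_i$. You instead argue: $\mathfrak{q}_i$ ramifies in $F_i$ but not in $F_1\cdots F_{i-1}$, ``forcing'' $F_i \cap (F_1\cdots F_{i-1}) = k$. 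This conclusion does not follow. The most that the ramification observation gives is that $F_i \cap (F_1\cdots F_{i-1})$ is contained in the maximal subextension of $F_i$ unramified at $\mathfrak{q}_i$. But $\mathfrak{q}_i$ \emph{splits} in $k(\sqrt[p]{a_i})$ by hypothesis, so that maximal unramified subextension is exactly $k(\sqrt[p]{a_i})$, not $k$: the inertia group of $\mathfrak{q}_i$ in $H = \text{Gal}(F_i/k)$ projects trivially to the $a_i$-direction of $H/Z$ and nontrivially to the $b_i$-direction, and its normal closure fixes precisely $k(\sqrt[p]{a_i})$. Hence your observation reduces the problem to showing $k(\sqrt[p]{a_i}) \not\subseteq F_1\cdots F_{i-1}$, which is where condition (i)'s coprimality of $(a_i)$ with prior ideals has to enter -- exactly the step the paper carries out and you elide.

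There is a second, subtler issue with your route. To claim $\mathfrak{q}_i$ is unramified in $F_1\cdots F_{i-1}$ you need $\mathfrak{q}_i$ unramified in each $F_j = k(\sqrt[p]{a_j},\sqrt[p]{b_j},\sqrt[p]{\omega_j})$ for $j<i$. Coprimality of $\mathfrak{q}_i$ with $(a_j),(b_j)$ (which is implicit in the consistency of system \eqref{eq_x}) controls ramification in $k(\sqrt[p]{a_j},\sqrt[p]{b_j})$. But for the cover by $\sqrt[p]{\omega_j}$ you need to control the valuation of $\omega_j$ (an element of $k(\sqrt[p]{a_j})$, not of $k$) at the primes above $\mathfrak{q}_i$. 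The system \eqref{eq_x} only constrains $v_\beta(x_j)$ at $\beta_1^j,\ldots,\beta_p^j$ and at the primes in $S_j$; it says nothing about the valuation of $x_j$, hence of $\omega_j$, above $\mathfrak{q}_i$, and $\mathfrak{q}_i$ is chosen after $x_j$. So the stated hypotheses do not justify "$\mathfrak{q}_i$ is coprime to every prior $(\omega_j)$." The paper's argument avoids this entirely because it never needs $\mathfrak{q}_i$ to be unramified in the prior $F_j$'s -- it only uses the coprimality data on $(a_m),(b_m)$ versus the prior $(a_j),(b_j)$, together with the group-theoretic observation that a nontrivial Galois subextension of $M_m$ must contain a nontrivial subfield of the elementary abelian part $k(\sqrt[p]{a_m},\sqrt[p]{b_m})$. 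You should either adopt the paper's route for disjointness, or strengthen your hypothesis on $\mathfrak{q}_i$ and close the $k(\sqrt[p]{a_i})$ case explicitly.
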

\begin{proof}
The first part of property (\ref{c1}) follows from the results of \cite{Micha}, in fact, setting  $$M_i:=k(\sqrt[p]{a_i},\sqrt[p]{b_i},\sqrt[p]{\omega_i}),$$ it is proved in \cite{Micha} that the extensions $M_i/k$ are Galois extensions with Galois group $H_i$ which is an extraspecial group of order $p^3$ and exponent $p$. 

We want to prove that the extensions $M_i$\rq{}s are linearly disjoint and that property (\ref{c2}) holds. We prove the result by induction on $m$.
The theorem is trivially true for $i=1$. 

We suppose the theorem to be true for $i<m$ and we want to prove that the extension $M_m$ is linearly disjoint from $M_i$ for every $i<m$. 
Since the only primes of $k$ which ramify in the extensions $k(\sqrt[p]{a_i})$ and $k(\sqrt[p]{b_i})$ are those above $p$ and those diving the ideals $(a_i)$ and $(b_i)$ respectively, condition $(\ref{a_i})$ ensures that $$k(\sqrt[p]{a_m}) \cap k(\sqrt[p]{a_i})=k(\sqrt[p]{a_m})\cap k(\sqrt[p]{b_i})=k$$ for every $i<m$. 
Since $x_m$ satisfies the system (\ref{eq_x}), we also have $$k(\sqrt[p]{a_m})\cap k(\sqrt[p]{b_m})=k.$$ Moreover we notice that if $\beta$ is a prime of $k$ diving $(a_i)$ or $(b_i)$ for $i<m$, then $\beta$ does not divide $(b_m)$, otherwise $(x_m)$ would be divisible by some elements of $S_m$. Using the previous arguments we get that $k(\sqrt[p]{a_m}, \sqrt[p]{b_m})\cap k(\sqrt[p]{a_i}, \sqrt[p]{b_i})=k$ for every $i<m$.

We notice that an extraspecial group of order $p^3$ has a unique normal subgroup of order $p$, which is given by the center; then the extension $k(\sqrt[p]{a_m}, \sqrt[p]{b_m})$ is the unique $p$-elementary subextension of $M_m$ of order $p^2$. Therefore the extensions $M_1,\ldots, M_m$ are linearly disjoint.

We now want to prove condition (\ref{c2}). We denote by $k_m$ the compositum of the fields $M_1,\ldots, M_m$. Then $\text{Gal}(k_m/k)=H_1\times\ldots\times H_m$ and the field $$M=k(\sqrt[p]{a_1},\sqrt[p]{b_1},\ldots,\sqrt[p]{a_m}, \sqrt[p]{b_m}, \sqrt[p]{\omega_1\ldots \omega_m})$$ is a subfield of $k_m$.

We consider the subgroup $N_m$ of $H_1\times
\ldots \times H_m$ described in Proposition \ref{extra_structure} defined by
$$N_m=\{(z_1^{a_1},\ldots,z_m^{a_m})|a_1+\ldots +a_m\equiv 0 \mod p\}$$ where $Z(H_i)=<z_i>$. It is easy to see that $z_i$ acts in the following way:
\begin{itemize} \item $z_i(\sqrt[p]{\omega_i})=\zeta_p\sqrt[p]{\omega_i}$;
\item $z_i(\sqrt[p]{a_i})=\sqrt[p]{a_i}$; 
\item $z_i(\sqrt[p]{b_i})=\sqrt[p]{b_i}$.
\end{itemize}
The quotient $(H_1\times \ldots \times H_m)/N_m$ is an
extraspecial group of order $p^{2m+1}$ and exponent $p$ and, by Galois correspondence,  the extension $k_m^{N_m}/k$ is an extraspecial extension of order
$p^{2m+1}$ and exponent $p$.
We claim that $M=k_m^{N_m}$. 

The
field $$k(\sqrt[p]{a_1},\sqrt[p]{ b_1},\ldots,
\sqrt[p]{a_m},\sqrt[p]{ b_m})$$ has degree $p^{2m}$ over
$k$ and is contained in $k_m^{N_m}$, since it is fixed
by every element of $N_m$. Thus $k_m^{N_m}$ must be a $p$-extension of this field.

If we take $n=(z_1^{a_1},\ldots,z_m^{a_m})\in N_m$, we have
$$n(\sqrt[p]{ \omega_1\cdot\ldots \cdot
\omega_m})=
{\zeta_p}^{a_1+\ldots +a_m}\sqrt[p]{ \omega_1\cdot\ldots \cdot
\omega_m}=\sqrt[p]{ \omega_1\cdot\ldots \cdot \omega_m}$$ since
$\sum_{i=1}^m a_i\equiv 0 \mod p$ for $n\in N_m$.

Therefore $M\subseteq k_m^{N_m}$ and, since they have the same degree, they are equal.
\end{proof}

\subsection{Extension by the group algebra}\label{realiz_group_algebra}

This section is meant to provide an explicit realization of the field used in the proof of part (\ref{a_noimpli_b}) of Theorem \ref{main_theorem}, as announced in Remark \ref{construction_group_algebra}.

To do this we will resort to considerations from Section \ref{irr_module_extra} on irreducible modules of extraspecial groups over certain finite fields, the explicit realization of extraspecial groups given in the previous section and a method by Serre.

We start by fixing two odd different primes $p$ and $q$. We consider the family of groups $\{E_m\}_m$, where $E_m$ is the extraspecial $p$-group of order $p^{2m+1}$ and exponent $p$ and we denote by $\mathbb{F}_q [E_m]$ the group algebra of $E_m$ over $\mathbb{F}_q$. 

We shall now use the method described in \cite{Serre}, p.18, to prove that if $F$ is a number field containing the $q$-th root of unity, $L_m/F$ is a Galois extension with Galois group $E_m$ and $G_m$ is the semidirect product $G_m=\mathbb{F}_q [E_m]\rtimes E_m$, then the embedding problem for $L_m/F$ and for $$1\rightarrow \mathbb{F}_q[E_m]\rightarrow G_m\rightarrow E_m\rightarrow 1$$ has a solution. This means that there exists a Galois extension $K_m$ of $L_m$ such that:
\begin{enumerate} 
\item $\text{Gal}(K_m/L_m)=\mathbb{F}_q[E_m]$;
\item $K_m/F$ is Galois with Galois group isomorphic to $G_m$.
\end{enumerate}
In order to construct $K_m$, we take a place $v$ of $F$ which splits completely in $L_m$ and we consider a place $w$ of $L_m$ above $v$.
Now we choose an element $x\in L_m$ such that $x$ has $w$-adic valuation $1$, but $w_i$-adic valuation $0$ for every $w_i\neq w$ above $v$.

We denote by $\Delta:=\{\sqrt[q]{\sigma(x)}|\sigma \in E_m\}$ and we set $K_m=L_m(\Delta)$. 
Then the extension $K_m/L_m$ is a Galois extension with Galois group isomorphic to $\mathbb{F}_q[E_m]$. In fact we consider the map $$\lambda:\mathbb{F}_q[E_m]\rightarrow \text{Gal}(K_m/L_m)$$
$$\sum_{\sigma\in E_m}a_{\sigma}\sigma\longmapsto \tau \phantom{rrrrrrr}$$
where $\tau$ is defined as $\tau(\sqrt[q]{\sigma(x)})=\zeta_q^{a_{\sigma}}\sqrt[q]{\sigma(x)}$; hen $\lambda$ is clearly the sought for isomorphism. 

The extension $K_m/F$ is also a Galois extension: in fact for every $\sigma\in \text{Gal}(L_m/F)=E_m$  and every embedding $\tilde{\sigma}:K_m\rightarrow F^{\text{alg}}$ with $\tilde{\sigma}|_{L_m}=\sigma$ we have that, by construction, $\tilde{\sigma}$ is an automorphism of $K_m$.
Finally the Galois group $\text{Gal}(K_m/L_m)$ is isomorphic to the semidirect product $\mathbb{F}_q[E_m]\rtimes E_m$, with $E_m$ acting on $\mathbb{F}_q[E_m]$ by translation. 

Now we set $F=\mathbb{Q}(\zeta_p,\zeta_q)$ and we take $\{L_m\}_m$ to be the family of Galois extensions of $F$ constructed in Section \ref{realiz_extraspecial} with  $\text{Gal}(L_m/F)=E_m$ the extraspecial group of order $p^{2m+1}$ and exponent $p$. For every $m$ we use the above method to solve the embedding problem for $L_m/F$ and for $$1\rightarrow \mathbb{F}_q[E_m]\rightarrow G_m\rightarrow E_m\rightarrow 1$$  and we construct a family of number fields $\{K_m\}_m$ with Galois groups $$\text{Gal}(K_m/\mathbb{Q}(\zeta_p,\zeta_q))=\mathbb{F}_q[E_m]\rtimes E_m.$$ 

In view of Theorem \ref{main_theorem}. (\ref{a_noimpli_b}), the compositum $K$ of the family $K_m$\rq{}s is an infinite algebraic extension of $\mathbb{Q}$ with uniformly bounded local degrees which is not contained in $\mathbb{Q}^{(d)}$ for any positive integer $d$.
\section*{Acknowledgments} I wish to thank Professor Umberto Zannier for suggesting me the problem and guiding me throughout my PhD thesis. I also thank Professor Andrea Lucchini for useful discussions on extraspecial groups and their structure and for his kind help.

\bibliographystyle{amsplain}

\begin{thebibliography}{10}
\bibitem{Zan}\label{Zan} E. Bombieri, U. Zannier, 
\textit{A note on heights in certain infinite extensions of $\mathbb{Q}$}.
Rend. Mat. Acc. Lincei, 12, 2001, pp. 5-14.
\bibitem{Che}\label{Che} S. Checcoli, U. Zannier,
\textit{On fields of algebraic numbers with bounded local degrees}.
Preprint.
\bibitem{Doerk}\label{Doerk} K. Doerk, T. Hawkes, \textit{Finite Solvable
Groups}. De Gruyter, Berlin, 1992.
\bibitem{Kras}\label{Kras} M. Krasner, \textit{Nombre des extensions d'un
degr\'{e} donn\'{e} d'un corps $p$-adique}. Les
Tendances G\'{e}om\'{e}triques en Alg\`{e}bre et Th\'{e}orie des
Nombres, Ed. CNRS, Paris, 1966, pp. 143-169.
\bibitem{Micha}\label{Micha} I. Michailov, \textit{Four non-abelian groups of order $p^4$ as Galois groups}.
 J. Algebra 307, 2007, pp. 287-299.
\bibitem{Serre}\label{Serre} J. P. Serre, H. Darmon, \textit{Topics in Galois
Theory}. Research notes in Mathematics, Jones and Bartlett
Publishers, 1992.
\bibitem{Shaf}\label{Shaf} I. R. Shafarevich, \textit{On p-extensions}. AMS.
Transl., Ser.2 4, 1956, pp. 59-72.
\bibitem{Shaf2}\label{Shaf2} I. R. Shafarevich, \textit{Construction of
fields of algebraic numbers with given solvable Galois group}.
 Izv. Akad. Nauk SSSR Ser. Mat., 18:6, 1954, pp.525-578.
\bibitem{Lee}\label{Lee} M. R. Vaughan-Lee, \textit{The restricted Burnside problem}. Second Ed., Oxford
University Press, 1993.
\bibitem{Widmer}\label{Widmer} M. Widmer \textit{On certain infinite extensions of the rationals with
Northcott property}. Monatsh. Math., 2009.

\end{thebibliography}

\end{document}